\newtheorem{theorem}{Theorem}[section]
\newtheorem{corollary}[theorem]{Corollary}
\newtheorem{lemma}[theorem]{Lemma}
\newtheorem{proposition}[theorem]{Proposition}
\newtheorem{example}[theorem]{Example}
\theoremstyle{definition}
\newtheorem{definition}[theorem]{Definition}
\newtheorem{remark}[theorem]{Remark}
\newcommand{\R}{\mathbb{R}}
\newcommand{\Z}{\mathbb{Z}}
\newcommand{\F}{\mathbb{F}}
\newcommand{\bx}{\mathbf{x}}
\newcommand{\by}{\mathbf{y}}
\newcommand{\vm}{\vec{m}}
\newcommand{\vd}{\vec{d}}
\newcommand{\V}{\mathcal{V}}
\newcommand{\CR}{\mathcal{C}}
\newcommand{\ad}{\mathfrak{a}}
\newcommand{\lcm}{\operatorname{lcm}}
\newcommand{\ord}{\operatorname{ord}}
\newcommand{\coeff}{\operatorname{coeff}}
\newcommand{\Mon}{\operatorname{Mon}}
\newcommand{\Gal}{\operatorname{Gal}}
\newcommand{\Orb}{\operatorname{Orb}}
\newcommand{\Div}{\operatorname{Div}}
\newcommand{\ovl}[1]{\overline{#1}}
\begin{document}

\begin{frontmatter}



\title{Wedderburn decomposition of commutative semisimple group algebras using the Combinatorial Nullstellensatz}

\author{Robert Christian Subroto \\ \texttt{bobby.subroto@ru.nl} \\ \texttt{ORCID: 0000 0001 5534 2655}}
\affiliation{organization={Radboud University},
            addressline={Digital Security},
            city={Nijmegen},
            country={The Netherlands}}

\begin{abstract}
In this paper, we present the simple components of the Wedderburn decomposition of semisimple commutative group algebras over finite abelian groups, which we investigate from a geometric point of view.
We also present the Wedderburn decomposition of semisimple commutative group algebras over finite fields.
\end{abstract}



\begin{keyword}



semisimple group algebras
\sep combinatorial Nullstellensatz
\sep cyclotomic extensions
\sep Galois theory
\end{keyword}

\end{frontmatter}



\section{Introduction}

For $k$ a field and $G$ an abelian group, the group algebra $k[G]$ is a ring with underlying set consisting of the formal sums $\{ \sum_{g \in G} c_g \cdot g : c_g \in G \}$.
Here for $f := \sum_{g \in G} f_g \cdot g$ and $f':= \sum_{g \in G} f'_g \cdot g'$, addition is defined as 
\begin{align*}
f + f' := \sum_{g \in G} (f_g + f'_g) \cdot g,
\end{align*}
and multiplication as the convolution product:
\begin{align*}
f \cdot f' := \sum_{g \in G} \left( \sum_{u, u' \in G : u \cdot u' = g} f_u f'_{u'}  \right) \cdot g.
\end{align*}

Group algebras have interesting applications in both pure and applied mathematics.
In pure mathematics for example, these group algebras play a very important role in representation theory of finite groups (see \cite{james2001representations}).
The simple components of $k[G]$, assuming that it is semisimple, classify all the irreducible group representations of $G$ over the field $k$ up to isomorhism, effectively providing all the building blocks of all group representations of $G$ over $k$.
In applied mathematics, the algebraic structure of commutative group algebras over finite fields are often used in designing and analyzing cryptographic schemes in both symmetric and asymmetric cryptography.
In symmetric cryptography, these group algebras are used in the designs of for example {\sc AES} \cite{DBLP:series/isc/DaemenR20} and {\sc Keccak}-$f$ (SHA-3) \cite{DBLP:journals/iacr/BertoniDPA15}.
Some notable examples of asymmetric cryptographic schemes using group algebras include some the post-quantum schemes like {\sc LEDAcrypt} \cite{baldi2019ledacrypt} and {\sc SABER} \cite{DBLP:journals/iacr/DAnversKRV18}.

A ring $R$ is said to be semisimple if $R$ is isomorphic to a direct sum of finitely many simple rings, which are rings with no proper left- or right ideals.
In the commutative case, this is equivalent to saying that $k[G]$ is isomorphic to a direct sum of finitely many fields, since all simple commutative rings are fields.
These simple rings are called the simple components of $R$, and these are unique up to isomorphism.
The \textbf{Wedderburn decomposition} of a semisimple ring $R$ refers to the direct sum into their simple components.
In this paper, we provide a method of determining the Wedderburn decomposition of semisimple group algebras over finite abelian groups.
Instead of using character theory, which is often used in the study of group algebras, we opt for a more geometric approach.
This translates into studying a specific class of coordinate rings, allowing us to use techniques from algebraic geometry and Galois theory.
A major advantage of this approach is that it applies to all fields, whereas character theory often assumes the underlying field to be algebraically closed.

\subsection{Outline}
In Section \ref{SECTION:circulantrings}, we introduce circulant rings which are the geometric equivalence of group algebras over finite abelian groups. 

In Section \ref{SECTION:galoisgroupactions}, we introduce the classical-, algebraic- and geometric Galois group actions, and their role in constructing the Wedderburn decomposition of semisimple circulant rings.

In Section \ref{SECTION:orbit}, we study the the orbit structure of the geometric group action introduced in Section \ref{SECTION:galoisgroupactions} in the special case when the base field is a finite field, which provides us with the Wedderburn decomposition of semisimple commutative group algebras over finite fields.

\subsection{Notation}
\label{SUBSECTION:Notation}

\paragraph{\textbf{Sets and indexing}}
The cardinality or size of a set $S$ is denoted as $\# S$.
The set of all positive integers strictly greater than $0$ is denoted as $\Z_{>0}$.
Similarly we define $\Z_{\geq 0}$.
If $S$ is a finite index set, we refer to $R^{\oplus S}$ as the direct sum of copies of the ring $R$ indexed by $S$.

\paragraph{\textbf{Rings and field extensions}}
Given a commutative ring $R$ with unity, we denote the multiplicative group of invertible elements of $R$ by $R^*$.

For a field $k$, we denote its algebraic closure by $\ovl{k}$. 
Given a field extension $l / k$, we denote the degree of the extension as $[l : k]$.
Moreover, given a tuple $\bx := ( x_1, \ldots ,x_n ) \in \ovl{k}^n$, the smallest field extension over $k$ containing all $x_i$ is denoted by $k(\bx)$.
We denote $\mu_m$ as the set of all $m$-th roots of unity in $\ovl{k}$.
When $l/k$ is a Galois extension, we  write the Galois group as $\Gal(l/k)$.

\paragraph{\textbf{Multivariate polynomials and vanishing set}}
We denote $k[X_1,...,X_n]$ by $A^n_k$.
The set of monomials is referred to as $\Mon(n)$.
Given a polynomial $f \in A^n_k$, we denote the coefficient of $f$ at the monomial $M \in \Mon(n)$ as $\coeff_M(f)$.   
For a monomial $M := \prod_{i=1}^n  X_i^{m_i} \in \Mon(n)$, the total degree of $M$ is defined as $\deg(M) := \sum_{i=1}^n m_i$. 
The $i$-th partial degree of $M$ is defined as $\deg_i(M) := m_i$.
We extend the notions of partial- and total degrees to general polynomials.
for a polynomial $f \in A^n_k$, we define the total degree of $f$ as: 
\begin{align*}
\deg(f) := \max(\deg(M) : M \in \Mon(m) \text{ and } \coeff_M(f) \neq 0).
\end{align*}
Similarly, the $i$-th partial degree of $f$ is defined as: 
\begin{align*}
\deg_i(f) := \max(\deg_i(M) : M \in \Mon(m) \text{ and } \coeff_M(f) \neq 0).
\end{align*}

\begin{example}
Take $f(X_1, X_2, X_3) := X_1^2 - 6 X_1 X_2^2 X_3^3 - 4 X_1 X_2^4 - 7 X_1^2 \in A^3_{\R}$.
Then $\deg(f) = 6$, $\deg_1(f) = 2$, $\deg_2(f) = 4$ and $\deg_3(f) = 3$.
\end{example}

For an ideal $\ad$ of $A^n_k$, we define the vanishing set of $\ad$ as
\begin{align*}
\V(\ad) := \left\{ \bx \in \ovl{k}^n \mid f(\bx) = 0 \text{ for all } f \in \ad \right\}.
\end{align*}


\paragraph{\textbf{Number theory}}
For an integer $m > 0$, we denote the ring of integers modulo $m$ as $\Z_m$.
This is not to be confused with the $p$-adic ring of integers which also uses the same notation in the case when $m = p$.
When referring to an element $x \in \Z_m$, we refer to the integer representation of the residue class of $x$, hence $x \in \{ 0, 1, \ldots, m-1 \}$.

\section{Circulant rings and group algebras}
\label{SECTION:circulantrings}

In this section, we introduce the notion of circulant rings, which are a certain class of coordinate rings.
We also show how circulant rings are related to group algebras over finite abelian groups, and why we can consider circulant rings as the geometric interpretation of such group algebras.

\subsection{Introducing circulant rings}

We provide a formal definition of circulant rings.

\begin{definition}
Let $k$ be a field, and let $\vm := (m_1, \ldots ,m_n) \in \Z_{>0}^n$ be an $n$-tuple of positive integers.
The \textbf{cyclotomic ideal over $k$ with parameters $\vm$} is defined as the ideal $( X_1^{m_1} - 1, \ldots , X_n^{m_n} - 1 )$ in $A^n_k$, and is denoted $\ad_{\vm} / k$, or simply $\ad_{\vm}$ if $k$ is clear from the context.

The corresponding \textbf{circulant ring} is defined as the coordinate ring $A^n_{k} / \ad_{\vm}$,
and we denote this by $\CR_{\vm / k}$.
\end{definition}

\begin{proposition}
\label{PROP:representatives}
The set 
\begin{align*}
\{ f \in A^n_{k} \mid \deg_i(f) < m_i \text{ for all } 1 \leq i \leq n \},
\end{align*}
is a set of representatives of $\CR_{\vm / k}$, which we call the \textbf{standard set of representatives}.
\end{proposition}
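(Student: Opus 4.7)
The plan is to establish two properties of the candidate set $\mathcal{S} := \{ f \in A^n_k : \deg_i(f) < m_i \text{ for all } 1 \leq i \leq n \}$: first, that every class in $\CR_{\vm / k}$ contains an element of $\mathcal{S}$ (spanning); second, that distinct elements of $\mathcal{S}$ represent distinct classes (uniqueness).

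For the spanning part, I would use iterated univariate polynomial division. Given $f \in A^n_k$, view $f$ as a polynomial in $X_1$ with coefficients in $k[X_2, \ldots, X_n]$ and divide by the $X_1$-monic polynomial $X_1^{m_1} - 1$, obtaining $f = q_1 \cdot (X_1^{m_1} - 1) + r_1$ with $\deg_1(r_1) < m_1$. Then treat $r_1$ as a polynomial in $X_2$ over $k[X_1, X_3, \ldots, X_n]$ and divide by $X_2^{m_2} - 1$, and so on through all $n$ variables. The resulting polynomial $g$ lies in $\mathcal{S}$ and satisfies $f - g \in \ad_{\vm}$.

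For uniqueness, I would exploit the natural $k$-algebra isomorphism
\[
\CR_{\vm / k} \;\cong\; \frac{k[X_1]}{(X_1^{m_1}-1)} \otimes_k \cdots \otimes_k \frac{k[X_n]}{(X_n^{m_n}-1)},
\]
which follows from $A^n_k \cong k[X_1] \otimes_k \cdots \otimes_k k[X_n]$ together with the standard fact that quotienting a tensor product of $k$-algebras by the sum of ideals pulled back from each factor yields the tensor product of the quotients. Since $X_i^{m_i}-1$ is monic, ordinary polynomial division shows that $k[X_i]/(X_i^{m_i}-1)$ is a free $k$-module with basis $\{1, X_i, \ldots, X_i^{m_i-1}\}$; hence the tensor product is a free $k$-module with basis $\{ X_1^{a_1} \cdots X_n^{a_n} : 0 \leq a_i < m_i \}$. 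These monomials are therefore $k$-linearly independent modulo $\ad_{\vm}$, so no two distinct elements of $\mathcal{S}$ can differ by an element of $\ad_{\vm}$.

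The one delicate bookkeeping point lies in the spanning step: one must check that successive divisions do not reintroduce high partial degrees in variables already reduced. This is immediate once one notes that division by $X_i^{m_i}-1$ takes place in the ring $k[X_j : j \neq i][X_i]$, so the quotient and remainder are $k[X_j : j \neq i]$-linear combinations of powers of $X_i$, whose coefficients are themselves $k$-linear combinations of those of the previous polynomial. Partial degrees in the other variables therefore cannot grow, and the iterative procedure terminates in a polynomial lying in $\mathcal{S}$. No step relies on any hypothesis relating $\operatorname{char}(k)$ to the $m_i$, so the proposition holds in arbitrary characteristic.
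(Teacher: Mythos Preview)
Your proof is correct and follows the same idea as the paper, whose proof is the single sentence ``Apply long division on the partial polynomial degrees.'' Your tensor-product argument for uniqueness is a valid elaboration of what the paper leaves implicit.
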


\begin{proof}
Apply long division on the partial polynomial degrees.
\end{proof}

\subsection{Group algebras and circulant rings}

\noindent Circulant rings are closely related to group algebras of finite abelian groups.

\begin{theorem}
\label{THM:group_alg-circulant_ring}
Consider the group $G = \Z_{m_1} \times \ldots \times \Z_{m_n}$ where the $m_i$ are positive integers.
Then the following map is an isomorphism of $k$-algebras:
\begin{align*}
\Phi_G \colon k[G] \to \CR_{\vm / k}, \ \sum_{g \in G}  f_g \cdot g \mapsto \sum_{g \in G} f_g \cdot \prod_{i=1}^n  X_i^{g_i},
\end{align*}
where $g := (g_1, \ldots, g_n) \in G$ and $\vm = (m_1, \ldots ,m_n)$.
\end{theorem}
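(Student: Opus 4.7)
The plan is to verify the three properties that make $\Phi_G$ a $k$-algebra isomorphism: it is a well-defined $k$-linear map, it is multiplicative, and it is bijective. The strategy is to check the algebraic properties on the natural basis $\{g : g \in G\}$ of $k[G]$, and then invoke Proposition \ref{PROP:representatives} to get bijectivity for free.

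First I would observe that $\Phi_G$ is manifestly $k$-linear, since both $k[G]$ and $\CR_{\vm / k}$ are given coordinate-wise addition on the coefficients $f_g$, and $\Phi_G$ just relabels the basis vector $g$ by the monomial $\prod_i X_i^{g_i}$. By $k$-bilinearity of the multiplications on both sides, checking $\Phi_G(g \cdot h) = \Phi_G(g) \cdot \Phi_G(h)$ for $g, h \in G$ suffices for multiplicativity. Writing $g = (g_1, \ldots, g_n)$ and $h = (h_1, \ldots, h_n)$, the group product is $g \cdot h = ((g_1 + h_1) \bmod m_1, \ldots, (g_n + h_n) \bmod m_n)$, so
\begin{align*}
\Phi_G(g \cdot h) = \prod_{i=1}^n X_i^{(g_i + h_i) \bmod m_i},
\qquad
\Phi_G(g) \cdot \Phi_G(h) = \prod_{i=1}^n X_i^{g_i + h_i}.
\end{align*}
These agree in $\CR_{\vm / k}$ because the relation $X_i^{m_i} \equiv 1 \pmod{\ad_{\vm}}$ lets us reduce each exponent modulo $m_i$. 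This is the only step that uses the defining relations of the cyclotomic ideal, and it is also the only place where one has to be careful, but no obstacle arises since the isomorphism $\Z/m_i\Z \to \langle X_i \rangle / (X_i^{m_i} - 1)$ is exactly what is being exploited.

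For bijectivity, I would appeal to Proposition \ref{PROP:representatives}: a $k$-basis of $\CR_{\vm / k}$ is given by the monomials $\prod_{i=1}^n X_i^{g_i}$ with $0 \leq g_i < m_i$, and these are precisely the images under $\Phi_G$ of the standard basis $\{g : g \in G\}$ of $k[G]$ once we identify $G$ with $\prod_i \{0, 1, \ldots, m_i - 1\}$. Hence $\Phi_G$ maps a basis to a basis and is therefore a $k$-linear isomorphism. Combined with multiplicativity and the fact that the identity $1 \in k[G]$ corresponds to the trivial group element, which maps to the monomial $1 \in \CR_{\vm / k}$, $\Phi_G$ is an isomorphism of $k$-algebras.

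I do not foresee a main obstacle: the statement is essentially a bookkeeping identity between convolution in $k[G]$ and polynomial multiplication modulo the relations $X_i^{m_i} - 1$. The only place where one might slip up is conflating the convolution formula on $k[G]$ with pointwise multiplication, but checking on the basis $\{g\}_{g \in G}$ and extending bilinearly avoids this pitfall cleanly.
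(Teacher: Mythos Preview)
Your proof is correct and follows essentially the same approach as the paper: both invoke Proposition~\ref{PROP:representatives} to establish bijectivity and then verify the algebra structure, with the only cosmetic difference being that the paper checks multiplicativity by expanding the convolution of two general elements, whereas you check it on basis vectors and extend by $k$-bilinearity.
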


\begin{proof}
The set of polynomials
\begin{align*}
\left\{
f_g \cdot \prod_{i=1}^n X_i^{g_i} : 0 \leq g_i < m_i \text{ and } f_g \in k
\right\},
\end{align*}
is simply another representation of the set of standard representatives introduced in Proposition \ref{PROP:representatives}.
As such, $\Phi_G$ is a well-defined bijective map.
The identities $\Phi_G(f + f') = \Phi_G(f) + \Phi_G(f')$ and $\Phi_G(c \cdot f) = c \cdot \Phi_G(f)$ for $f,f' \in k[G]$ and $c \in k$ are immediate.

For multiplication, define the set $\hat{G} := \{ (g_1, \ldots, g_n) \in \Z_{\geq 0}^n : 0 \leq g_i < m_i \}$, which is the underlying set of $G$ viewed as a subset of the $n$-tuples of integers $\geq 0$.
Observe that:
\begin{align*}
\Phi_G(f) \cdot \Phi_G(f')
&= \left(  \sum_{g \in \hat{G}} f_g \cdot \prod_{i=1}^n  X_i^{g_i}  \right) \cdot \left(  \sum_{g' \in \hat{G}} f'_{g'} \cdot \prod_{i=1}^n  X_i^{g'_i}  \right)  \\
&=  \sum_{\gamma \in \Z_{\geq 0}^n} \left( \sum_{g + g' = \gamma} f_g f'_{g'} \right) \prod_{i=1}^n  X_i^{\gamma}.
\end{align*}
The standard representative in $\CR_{\vm / k}$ of the latter expression is the polynomial
\begin{align*}
\sum_{\gamma \in \hat{G}} \left( \sum_{g + g' \equiv \gamma \bmod \vm} f_g f'_{g'} \right) \prod_{i=1}^n  X_i^{\gamma},
\end{align*}
where $g + g' \equiv \gamma \bmod \vm$ means that $g_i + g'_i \equiv \gamma_i \bmod m_i$ for all $1 \leq i \leq n$.

On the other hand:
\begin{align*}
\Phi_G(f \cdot f')
&= \Phi_G \left( \sum_{\gamma \in G} \left(  \sum_{g + g' = \gamma} f(g) f'(g')\right) \gamma \right) \\
&= \sum_{\gamma \in \hat{G}} \left( \sum_{g + g' \equiv \gamma \bmod \vm} f_g f'_{g'} \right) \prod_{i=1}^n  X_i^{\gamma},
\end{align*}
which shows that $\Phi_G(f) \cdot \Phi_G(f') = \Phi_G(f \cdot f')$.
\end{proof}

The above theorem applies to all finite abelian groups, due to the fundamental theorem of finite abelian groups.

\begin{theorem}[\textbf{Fundamental theorem of finite abelian groups {\cite{lang2004algebra}}}]
\label{Thm-FundThmGroup}
Let $G$ be a finite abelian group.
Then there exists a unique tuple of integers $(m_1, \ldots ,m_n)$ such that $m_i \mid m_{i+1}$ for all $1 \leq i \leq n-1$, and 
\begin{align*}
G \cong \Z_{m_1} \times \ldots \times \Z_{m_n}.
\end{align*}
\end{theorem}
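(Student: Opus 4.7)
The plan is to prove the decomposition in three stages: primary decomposition, decomposition of each primary part into cyclic factors, and reassembly into the invariant-factor form with divisibility conditions. Throughout I write $G$ additively.

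First, I would establish the \textbf{primary decomposition}. For each prime $p$ dividing $\# G$, define the $p$-primary component $G_p := \{ g \in G : p^k g = 0 \text{ for some } k \geq 0 \}$, which is a subgroup. A Bézout argument on element orders, splitting an element of order $n = \prod p_i^{a_i}$ into components of $p_i$-power order via the coefficients of $1 = \sum c_i (n/p_i^{a_i})$, shows that $G = \bigoplus_p G_p$, the sum ranging over the finitely many primes dividing $\# G$. This reduces the problem to finite abelian $p$-groups.

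Next, for a finite abelian $p$-group $G$, I would prove by induction on $\# G$ that $G$ is a direct product of cyclic groups of prime power order. The key step, which I expect to be the \emph{main obstacle}, is the classical splitting lemma: if $x \in G$ has maximal order $p^k$, then $\langle x \rangle$ is a direct summand of $G$. The usual proof picks a subgroup $H$ maximal with $H \cap \langle x \rangle = 0$ and then, assuming $H + \langle x \rangle \neq G$, selects an element $y$ outside this sum of minimal order, uses the maximality of the order of $x$ to show that $py \in H + \langle x \rangle$ can be adjusted to lie in $H$, and thereby enlarges $H$ to reach a contradiction. Once the splitting holds, the inductive hypothesis applied to the smaller $p$-group $G / \langle x \rangle$ delivers the decomposition into cyclic $p$-power factors.

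Finally, I would assemble the invariant factor form and prove uniqueness. Each $G_p$ is now isomorphic to $\Z_{p^{e_{p,1}}} \times \cdots \times \Z_{p^{e_{p,n_p}}}$ with $e_{p,1} \leq \cdots \leq e_{p,n_p}$. Using the Chinese Remainder Theorem, I would collect across all primes the largest prime-power factor of each $G_p$ into one cyclic factor $\Z_{m_n}$, the next-largest into $\Z_{m_{n-1}}$, and so on, padding with trivial exponents when some $G_p$ contributes fewer factors than others. The per-prime ordering of exponents forces $m_i \mid m_{i+1}$. For uniqueness, I would compare the intrinsic invariants $\dim_{\F_p}(p^j G / p^{j+1} G)$ for each prime $p$ and each $j \geq 0$: these depend only on the isomorphism class of $G$ and count the number of cyclic factors of $G_p$ of order at least $p^{j+1}$, so they recover each exponent $e_{p,i}$ and hence the tuple $(m_1, \ldots, m_n)$.
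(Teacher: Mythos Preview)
Your sketch is the standard textbook proof and is correct in outline: primary decomposition via B\'ezout, the splitting lemma for $p$-groups (an element of maximal order generates a direct summand), reassembly into invariant factors via the Chinese Remainder Theorem, and uniqueness via the $\F_p$-dimensions of $p^j G / p^{j+1} G$. Note, however, that the paper does not prove this theorem at all; it is stated with a citation to Lang's \emph{Algebra} and used as a black box, so there is no ``paper's own proof'' to compare against. Your argument is essentially the one found in such references.
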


\section{Wedderburn decomposition of semisimple circulant rings}
\label{SECTION:galoisgroupactions}

A consequence of Theorem \ref{THM:group_alg-circulant_ring} is that finding the simple components of semisimple commutative group algebras over $k$ is equivalent to finding simple components of semisimple circulant rings over $k$.
There is a natural analogue of Maschke's theorem for circulant rings, which provides explicit criteria for determining when a circulant ring is semisimple.

\begin{lemma}[\textbf{Maschke's theorem for circulant rings}]
\label{LEMMA:semisimple_condition}
Let $k$ be a field of characteristic $p$, and consider the circulant ring $\CR_{\vm / k}$ where $\vm = (m_1, \ldots, m_n)$.
Then $\CR_{\vm / k}$ is semisimple if and only if $m_1, \ldots, m_n$ are all coprime to $p$.
\end{lemma}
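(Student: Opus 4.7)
The approach is to use the standard fact that a finite-dimensional commutative $k$-algebra is semisimple if and only if it is reduced, and then detect nilpotents in $\CR_{\vm / k}$ via characteristic-$p$ arithmetic combined with a base change to $\ovl{k}$.

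For the ``only if'' direction I plan to prove the contrapositive. Suppose $p \mid m_i$ for some $i$; without loss of generality $i = 1$, and write $m_1 = p^a m_1'$ with $a \geq 1$ and $\gcd(m_1', p) = 1$. The Frobenius identity in $A^n_k$ yields
\[
X_1^{m_1} - 1 \;=\; (X_1^{m_1'} - 1)^{p^a},
\]
so the class of $X_1^{m_1'} - 1$ modulo $\ad_{\vm}$ is nilpotent. By Proposition \ref{PROP:representatives} this class is nonzero, since $X_1^{m_1'} - 1$ is a nonzero standard representative (we have $m_1' < m_1$). Hence $\CR_{\vm / k}$ is not reduced, and therefore not semisimple.

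For the ``if'' direction I would extend scalars to $\ovl{k}$. From the presentation one obtains $\CR_{\vm / k} \otimes_k \ovl{k} \cong \CR_{\vm / \ovl{k}}$ together with the tensor-product decomposition $\CR_{\vm / \ovl{k}} \cong \bigotimes_{i=1}^n \ovl{k}[X_i] / (X_i^{m_i} - 1)$. When each $m_i$ is coprime to $p$, each polynomial $X_i^{m_i} - 1$ is separable (its derivative $m_i X_i^{m_i - 1}$ is coprime to it), so it splits into distinct linear factors over $\ovl{k}$. The Chinese Remainder Theorem then identifies each factor with $\ovl{k}^{m_i}$, and the tensor product collapses to $\ovl{k}^{m_1 \cdots m_n}$, a product of fields, in particular reduced. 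Since the natural map $\CR_{\vm / k} \to \CR_{\vm / \ovl{k}}$ is injective (standard representatives remain $k$-linearly independent after base change), any nilpotent of $\CR_{\vm / k}$ would produce one in $\CR_{\vm / \ovl{k}}$, so $\CR_{\vm / k}$ itself is reduced and thus semisimple.

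The main obstacle is primarily presentational: handling the CRT/tensor-product identification over $\ovl{k}$ cleanly without excessive indexing, and being explicit that ``reduced implies semisimple'' for finite-dimensional commutative $k$-algebras. No step is genuinely deep---this is essentially the classical Maschke argument recast in coordinate-ring language, with Frobenius producing the obstructing nilpotent whenever $p$ divides one of the $m_i$.
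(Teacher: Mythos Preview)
Your proof is correct but takes a genuinely different route from the paper. The paper's argument is a two-line reduction: it invokes the isomorphism $\CR_{\vm/k} \cong k[G]$ from Theorem~\ref{THM:group_alg-circulant_ring} with $G = \Z_{m_1} \times \cdots \times \Z_{m_n}$, and then cites the classical Maschke theorem for group algebras, observing that $\#G = \prod_i m_i$ is coprime to $p$ exactly when every $m_i$ is.

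Your argument instead works entirely on the coordinate-ring side, using the equivalence ``reduced $\Leftrightarrow$ semisimple'' for finite-dimensional commutative $k$-algebras. The Frobenius trick exhibits a nonzero nilpotent directly when $p \mid m_i$, and the base change to $\ovl{k}$ together with the Chinese Remainder Theorem handles the converse. This is more self-contained (it does not import Maschke's theorem as a black box) and is arguably more in the geometric spirit the paper advertises; indeed your ``if'' direction is essentially a preview of Theorem~\ref{THM:decom_prelude}, obtained via CRT rather than the Combinatorial Nullstellensatz. The trade-off is length: the paper's proof is shorter precisely because the heavy lifting is delegated to a cited result.
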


\begin{proof}
By Theorem \ref{THM:group_alg-circulant_ring}, $\CR_{\vm / k}$ is isomorphic to the group algebra $k[G]$, where $G \cong \Z_{m_1} \times \ldots \times \Z_{m_n}$.
Maschke's theorem \cite{james2001representations} states that $k[G]$ is semisimple if and only if the order of $G$ is coprime to $p$.
Observe that $\# G = \prod_{i=1}^n m_i$, which is coprime to $p$ if and only if $m_1, \ldots, m_n$ are all coprime to $p$.
\end{proof}

In this section, we show how one can find the simple components of the Wedderburn decomposition of semisimple circulant rings using techniques from cyclotomic field extensions and algebraic geometry.
Since only semisimple circulant rings are considered, we assume for the remainder of this section that for a field $k$ and for $\vm := (m_1, \ldots, m_n)$ whose entries are all coprime to the characteristic of $k$.

\subsection{Combinatorial Nullstellensatz}

A fundamental result in studying the algebraic structure of coordinate rings in algebraic geometry is Hilbert's Nullstellensatz.
It states a bijective relation between varieties (subsets of $k^n$ which are zeroes of a family of polynomials in $A^n_k$) and the radical ideals of $A^n_k$.
A main downside of this theorem is that it assumes the field $k$ to be algebraically closed.
A weaker version is the Combinatorial Nullstellensatz, where the assumptions for the basefield $k$ are not as strong.

\begin{theorem}[\textbf{Combinatorial Nullstellensatz {\cite{alon2001combinatorial}}}] 
\label{THM:CombNullSatz}
Let $k$ be an arbitrary field, and let $f = f(X_1, . . . , X_n)$ be a polynomial in $A^n_{k}$.
Let $S_1,  \ldots  , S_n$ be nonempty subsets of $k$ and define $g_i(X_i) = \prod_{s \in S_i} (X_i - s)$.
If $f$ vanishes over all the common zeroes of $g_1, \ldots , g_n$ (that is; if $f(s_1, . . . , s_n) = 0$ for all $s_i \in S_i$), then there are polynomials
$h_1, . . . , h_n \in A^n_{k}$ satisfying $\deg(h_i) \leq \deg(f) - \deg(g_i)$ so that
\begin{align*}
f = \sum_{i=1}^n h_i g_i.
\end{align*}
\end{theorem}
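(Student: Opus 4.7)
The plan is to reduce $f$ modulo the ideal $(g_1, \ldots, g_n)$ to produce a decomposition $f = \sum_{i=1}^n h_i g_i + \tilde{f}$, where $\tilde{f}$ has each $i$-th partial degree strictly less than $\# S_i$ and the multipliers $h_i$ satisfy the required degree bound, and then to show that the vanishing hypothesis forces $\tilde{f} = 0$.

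For the reduction, set $m_i := \# S_i$, so that $\deg(g_i) = m_i$ and modulo $g_i$ one has $X_i^{m_i} \equiv X_i^{m_i} - g_i(X_i)$, with the right-hand side a polynomial in $X_i$ of degree strictly less than $m_i$. I would process $f$ monomial by monomial: for each monomial $M$ of $f$ and each index $i$ with $\deg_i(M) \geq m_i$, I would iteratively apply the rewrite $X_i^{d} = X_i^{d - m_i} g_i(X_i) + X_i^{d - m_i}(X_i^{m_i} - g_i(X_i))$ to peel off a multiple of $g_i$ while reducing $\deg_i$ by $m_i$. Cycling through indices until every partial degree of the remainder falls below $m_i$, one obtains $f = \sum_i h_i g_i + \tilde{f}$ in which each contribution to $h_i$ originates from a single monomial $M$ of $f$ through a factor of total degree $\deg(M) - m_i \leq \deg(f) - \deg(g_i)$, giving the degree bound termwise.

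Next I would argue that $\tilde{f} \equiv 0$. Since each $g_i$ vanishes on the grid $S_1 \times \cdots \times S_n$ and $f$ does so by hypothesis, $\tilde{f}$ must vanish there as well. The conclusion then follows from the \emph{grid vanishing lemma}: if $p \in A^n_k$ satisfies $\deg_i(p) < \# S_i$ for all $i$ and $p$ vanishes on $S_1 \times \cdots \times S_n$, then $p = 0$. I would prove this by induction on $n$. For $n = 1$, a univariate polynomial of degree less than $\# S_1$ with $\# S_1$ distinct roots is zero. For the inductive step, view $p$ as a polynomial in $X_n$ with coefficients $c_j(X_1, \ldots, X_{n-1})$; fixing any $(s_1, \ldots, s_{n-1}) \in S_1 \times \cdots \times S_{n-1}$, the specialization $p(s_1, \ldots, s_{n-1}, X_n)$ is a univariate polynomial of degree less than $\# S_n$ vanishing on $S_n$, hence identically zero, so each coefficient $c_j$ vanishes on $S_1 \times \cdots \times S_{n-1}$, and the inductive hypothesis (applicable because the partial degrees of $c_j$ are bounded by those of $p$) forces each $c_j$ to be zero.

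The main obstacle I anticipate is the bookkeeping in the reduction step to guarantee the bound $\deg(h_i) \leq \deg(f) - \deg(g_i)$ once contributions across monomials and across all indices are combined. Processing the reduction monomial by monomial is crucial: if one batches reductions or interleaves indices carelessly, the accumulated $h_i$ can appear to exceed $\deg(f) - \deg(g_i)$, even though a careful termwise accounting still yields the bound. The vanishing lemma is conceptually clean but requires that the partial degree bound on $p$ be inherited by the coefficients in the remaining variables, which is immediate from the definition of $\deg_i$ but worth verifying explicitly during the induction.
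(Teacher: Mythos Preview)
The paper does not supply its own proof of this theorem; it merely states the result and cites Alon~\cite{alon2001combinatorial}. Your proposal is in fact the standard proof given in Alon's original paper: reduce $f$ modulo the $g_i$ one variable at a time to obtain $f = \sum_i h_i g_i + \tilde f$ with $\deg_i(\tilde f) < \#S_i$, observe that the rewriting never increases total degree so that the bound $\deg(h_i) \le \deg(f) - \deg(g_i)$ holds, and then kill $\tilde f$ via the grid vanishing lemma proved by induction on $n$. Your outline is correct and matches the cited source; the degree-bound bookkeeping you flag as the main worry is handled exactly as you suggest, by noting that replacing $X_i^{m_i}$ with $X_i^{m_i} - g_i(X_i)$ (a polynomial of degree $< m_i$ in $X_i$) never raises the total degree of any term, so every monomial contributing to $h_i$ has total degree at most $\deg(f) - m_i$.
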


We show how the Combinatorial Nullstellensatz is used for constructing the Wedderburn decomposition of semisimple circulant rings, given some restrictions on the basefield $k$.

\begin{lemma}
Let $k$ be any field of characteristic $p$, and let $\vm := (m_1, \ldots ,m_n)$ whose entries are coprime to $p$.
Then 
\begin{align*}
\V(\ad_{\vm}) = \mu_{m_1} \times \ldots \times \mu_{m_n}.
\end{align*}
\end{lemma}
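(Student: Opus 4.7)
The plan is to prove the set equality by showing the two inclusions separately, each of which reduces to unwinding the definition of the ideal $\ad_{\vm}$ and the vanishing set $\V(\ad_{\vm})$. The Combinatorial Nullstellensatz is not actually needed here; it will play its role in subsequent statements, once we start refining the description of $\V(\ad_{\vm})$ into smaller orbits.

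For the inclusion $\mu_{m_1} \times \ldots \times \mu_{m_n} \subseteq \V(\ad_{\vm})$, I would take an arbitrary point $\bx = (x_1, \ldots, x_n)$ in the right-hand side, so that $x_i^{m_i} = 1$ in $\ovl{k}$ for each $i$. Any $f \in \ad_{\vm}$ can be written as $f = \sum_{i=1}^n h_i \cdot (X_i^{m_i} - 1)$ for some $h_i \in A^n_k$, and evaluating at $\bx$ gives $f(\bx) = \sum_{i=1}^n h_i(\bx) \cdot (x_i^{m_i} - 1) = 0$, so $\bx \in \V(\ad_{\vm})$.

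For the reverse inclusion $\V(\ad_{\vm}) \subseteq \mu_{m_1} \times \ldots \times \mu_{m_n}$, I would take $\bx \in \V(\ad_{\vm})$. By definition, every polynomial in $\ad_{\vm}$ vanishes at $\bx$; in particular this applies to the generators $X_i^{m_i} - 1$ for $i = 1, \ldots, n$. Hence $x_i^{m_i} = 1$, i.e.\ $x_i \in \mu_{m_i}$, and therefore $\bx \in \mu_{m_1} \times \ldots \times \mu_{m_n}$.

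There is no real obstacle: both directions are immediate from the definitions, and nothing about the semisimplicity hypothesis (coprimality of the $m_i$ with $p$) or the algebraic structure of $\ovl{k}$ enters the argument at this stage. The coprimality assumption becomes relevant only later, when one needs $\# \mu_{m_i} = m_i$ to match dimensions in the Wedderburn decomposition.
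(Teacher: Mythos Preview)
Your proof is correct and is exactly what the paper intends: the paper's own proof consists of the single sentence ``This is immediate,'' and you have simply spelled out the two obvious inclusions that make it so. Your remark that the coprimality hypothesis is not used here (only later, for the cardinality $\#\mu_{m_i}=m_i$) is also accurate.
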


\begin{proof}
This is immediate.
\end{proof}

\begin{theorem}
\label{THM:decom_prelude}
Let $k$ be a field, and consider the semisimple circulant ring $\CR_{\vm / k}$, where $\vm = (m_1, \ldots, m_n)$.
Assume moreover that $\mu_{m_i} \subset k$ for all $1 \leq i \leq n$.
Then we have a well-defined ring isomorphism
\begin{align}
\label{EQ:Wedderburn_embedding}
\tau_{{\vm} / k} \colon \CR_{\vm / k} \to k^{\oplus \V(\ad_{\vm})}, \ f \mapsto (f(\bx))_{\bx \in \V(\ad_{\vm})},
\end{align}
where $k^{\oplus \V(\ad_{\vm})}$ is defined as the direct sum of $\# \V(\ad_{\vm})$ copies of $k$, indexed by $\V(\ad_{\vm})$.
\end{theorem}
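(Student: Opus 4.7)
The plan is to first check that $\tau_{\vm / k}$ is a well-defined ring homomorphism, and then to establish bijectivity by combining an injectivity argument via the Combinatorial Nullstellensatz with a $k$-dimension count.

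The well-definedness has two aspects. On the codomain side, the hypothesis $\mu_{m_i} \subset k$ together with the preceding lemma yields $\V(\ad_{\vm}) = \mu_{m_1} \times \cdots \times \mu_{m_n} \subset k^n$, so every evaluation $f(\bx)$ indeed lies in $k$. On the domain side, any $f \in \ad_{\vm}$ vanishes on $\V(\ad_{\vm})$ by the very definition of the vanishing set, so the assignment $f \mapsto (f(\bx))_{\bx}$ descends to the quotient $\CR_{\vm / k}$. The homomorphism property is then immediate because $\tau_{\vm / k}$ is a product of the evaluation-at-$\bx$ homomorphisms.

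The substantive step, and the one I expect to carry all the work, is injectivity. Suppose $[f] \in \CR_{\vm / k}$ satisfies $f(\bx) = 0$ for every $\bx \in \mu_{m_1} \times \cdots \times \mu_{m_n}$. I will apply the Combinatorial Nullstellensatz (Theorem \ref{THM:CombNullSatz}) with $S_i := \mu_{m_i}$, for which the associated factor polynomial is $g_i(X_i) = \prod_{s \in \mu_{m_i}}(X_i - s) = X_i^{m_i} - 1$; here I use that $m_i$ is coprime to the characteristic, so $\mu_{m_i}$ has exactly $m_i$ elements and $X^{m_i} - 1$ splits completely in $k$. The theorem then produces $h_1, \ldots, h_n$ with $f = \sum_{i=1}^n h_i (X_i^{m_i} - 1) \in \ad_{\vm}$, forcing $[f] = 0$. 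This is the reason the argument goes through without assuming $k$ to be algebraically closed, and it is the main (though not technically deep) obstacle in the proof.

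Once injectivity is in hand, surjectivity follows by matching $k$-dimensions. Proposition \ref{PROP:representatives} gives $\dim_k \CR_{\vm / k} = m_1 m_2 \cdots m_n$, and the same semisimplicity hypothesis gives $\# \V(\ad_{\vm}) = \prod_{i=1}^n \#\mu_{m_i} = m_1 m_2 \cdots m_n$, so the codomain has the same $k$-dimension. An injective $k$-linear map between finite-dimensional $k$-vector spaces of equal dimension is automatically surjective, so $\tau_{\vm / k}$ is the desired ring isomorphism.
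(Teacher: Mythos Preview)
Your proof is correct and follows essentially the same route as the paper: establish well-definedness from $\mu_{m_i}\subset k$, apply the Combinatorial Nullstellensatz with $S_i=\mu_{m_i}$ and $g_i=X_i^{m_i}-1$ to identify the kernel (equivalently, prove injectivity), and finish with a $k$-dimension count for surjectivity. Your write-up is in fact slightly more explicit than the paper's in invoking Proposition~\ref{PROP:representatives} and in noting why $X_i^{m_i}-1$ splits completely, but the arguments are otherwise identical.
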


\begin{proof}
Let $f$
Consider the ring homomorphism
\begin{align*}
\tau'_{\vm / k} \colon A^n_k \to k^{\oplus \V(\ad_{\vm})}, \ f \mapsto (f(\bx))_{\bx \in \V(\ad_{\vm})}.
\end{align*}
This is a well-defined map since $f(\bx) \in k$ whenever $\bx \in \V(\ad_{\vm})$, and since $\mu_{m_i} \subset k$ for all $1 \leq i \leq n$.

The kernel of $\tau'_{\vm / k}$ consists of exactly the polynomials $f \in A^n_k$ which vanish over $\V(\ad_{\vm})$, which translates to $f(s_1, \ldots, s_n) = 0$ if and only if $s_i \in \mu_{m_i}$ for all $1 \leq i \leq n$.
Since $m_i$ is coprime to the characteristic of $k$, we have the identity $X_i^{m_i} - 1 = \prod_{s_i \in \mu_{m_i}}  (X_i - s_i)$.
Hence by the Combinatorial Nullstellensatz (Theorem \ref{THM:CombNullSatz}), a polynomial $f \in A^n_k$ vanishes over $\V(\ad_{\vm})$ if and only if $f \in (X_1^{m_1} - 1, \ldots, X_n^{m_n} - 1)$, which is by defnition the ideal $\ad_{\vm}$.
As such, $\ker(\tau'_{\vm / k}) = \ad_{\vm}$, which induces the injective homomorphism 
\begin{align*}
A^n_k / \ker(\tau'_{\vm / k}) := \CR_{\vm / k} \to k^{\oplus \V(\ad_{\vm})}.
\end{align*}
This is exactly the map $\tau_{\vm / k}$ in Eq. (\ref{EQ:Wedderburn_embedding}), which proves well-definedness and injectivity.

We only need to show surjectivity.
Looking at the dimension viewed as vector spaces over $k$, we have 
\begin{align*}
\dim_{k}(\CR_{\vm / k}) = \# \V(\ad_{\vm}) = \dim_k \left( k^{\oplus \V(\ad_{\vm})} \right).
\end{align*}
Since $\tau_{\vm / k}$ is an injective linear map, it must thus also be surjective due to the dimensions of the vector spaces being equal.
\end{proof}

\subsection{Galois group actions}

Theorem \ref{THM:decom_prelude} presents the simple components of semisimple circulant rings, given that their base field $k$ is ``large enough''.
We refine this decomposition to any field $k$, which requires some results from Galois theory and cyclotomic extensions.

\begin{definition}
\label{DEF:k_m}
Let $k$ be a field of characteristic $p$, and consider an $n$-tuple $\vm = (m_1, \ldots, m_n) \in \Z_{>0}$ whose entries are coprime to $p$.
Then $k_{\vm}$ is defined as the cyclotomic extension $k(\mu_{m_1}, \ldots, \mu_{m_n})$, which is the smallest field extension of $k$ containing $\mu_{m_i}$ for all $1 \leq i \leq n$.
\end{definition}

Since $k_{\vm} / k$ is a cyclotomic extension, it is a Galois extension with abelian Galois group $\Gal(k_{\vm} / k)$.
In this section, we discuss three group actions by $\Gal(k_{\vm} / k)$ which are key to constructing the semisimple decomposition of circulant rings.

\begin{definition}[\textbf{Classical group action}]
We define the classical group action as
\begin{align*}
\Gal(k_{\vm} / k) \times k_{\vm} \to k_{\vm}, \ (\sigma, x) \mapsto \sigma(x).
\end{align*}
\end{definition}

\begin{definition}[\textbf{Algebraic group action}]
For any $\sigma \in \Gal(k_{\vm} / k)$ and for any $f \in \CR_{\vm / k_{\vm}}$, we define $\sigma(f) := \sum_{M \in \Mon(n)} \sigma(\coeff_M(f)) \cdot M$.
This induces the group action
\begin{align*}
\Gal(k_{\vm} / k) \times \CR_{\vm / k_{\vm}}  \to \CR_{\vm / k_{\vm}}, \ (\sigma, f) \mapsto \sigma(f),
\end{align*}
which we call the \textbf{algebraic group action}.
\end{definition}

\begin{remark}
\label{REMARK:Gal-invariant}
The $\Gal(k_{\vm} / k)$-invariants of $\CR_{\vm / k_{\vm}}$ equals the set $\CR_{\vm / k}$.
To see this, note that for $f \in \CR_{\vm / k_{\vm}}$, $\sigma(f)$ only affects the coefficients of the terms of $f$.
As such by Proposition \ref{PROP:representatives}, given that $f$ is contained in the standard set of representatives, so is $\sigma(f)$.
Hence $f = \sigma(f)$ in $\CR_{\vm / k_{\vm}}$ if and only if the coefficients of $f$ stays invariant under $\sigma$.
By the fundamental theorem of Galois theory \cite[Theorem~4.10.1]{ehrlich2011fundamental}, this happens if and only if all coefficients of $f$ are contained in $k$, or equivalently when $f \in \CR_{\vm / k}$. 
\end{remark}

We now introduce the group action whose structure plays a key role in the decomposition of semisimple circulant rings.

\begin{definition}[\textbf{Geometric group action}]
\label{DEF:geom_alpha}
For any $\sigma \in \Gal(k_{\vm} / k)$ and for any $\bx := (x_1, \ldots ,x_n) \in \V(\ad_{\vm})$, we define $\sigma(\bx) := (\sigma(x_1), \ldots ,\sigma(x_n))$.
This induces the group action
\begin{align*}
\alpha_{\vm / k} \colon \Gal(k_{\vm} / k) \times \V(\ad_{\vm}) \to \V(\ad_{\vm}), \ (\sigma, \bx) \mapsto \sigma(\bx),
\end{align*}
which we call the \textbf{geometric group action}.

For $\bx \in \V(\ad_{\vm})$, we denote $\Orb(\bx)$ as the corresponding orbit of $\bx$ under $\alpha_{\vm / k}$.
\end{definition}

The classical-, algebraic-, and the geometric group actions are related in the following way:

\begin{lemma}
\label{LEM:actionID}
For any $\sigma \in \Gal(k_{\vm} / k)$, $\bx \in \V(\ad_{\vm})$ and $f \in\CR_{\vm / k_{\vm}}$, we have the identity
\begin{align*}
\sigma(f(\bx)) = \sigma(f) (\sigma(\bx)).
\end{align*}
\end{lemma}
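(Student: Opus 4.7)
The plan is to unpack both sides in terms of the monomial expansion of $f$ and then use the fact that $\sigma$ is a field homomorphism to push it past sums and products. This is essentially a formal computation, but organizing the bookkeeping carefully is what makes it rigorous.

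First I would write $f$ in its standard representative form as a sum over monomials: $f = \sum_{M \in \Mon(n)} \coeff_M(f) \cdot M$, where the sum is finite. Evaluating at $\bx = (x_1, \ldots, x_n) \in \V(\ad_{\vm})$ gives
\begin{align*}
f(\bx) = \sum_{M \in \Mon(n)} \coeff_M(f) \cdot M(\bx),
\end{align*}
where for $M = \prod_{i=1}^n X_i^{m_i}$ we have $M(\bx) = \prod_{i=1}^n x_i^{m_i} \in k_{\vm}$. Note that $M(\bx)$ indeed lies in $k_{\vm}$ because each $x_i \in \mu_{m_i} \subset k_{\vm}$.

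Next, I apply $\sigma$ to $f(\bx)$. Since $\sigma \in \Gal(k_{\vm}/k)$ is a field automorphism, it is additive and multiplicative, so
\begin{align*}
\sigma(f(\bx)) = \sum_{M \in \Mon(n)} \sigma(\coeff_M(f)) \cdot \sigma(M(\bx)).
\end{align*}
The key observation is that $\sigma$ commutes with monomial evaluation: because $\sigma$ respects products and because the exponents $m_i$ are nonnegative integers, $\sigma(M(\bx)) = \sigma\bigl(\prod_i x_i^{m_i}\bigr) = \prod_i \sigma(x_i)^{m_i} = M(\sigma(\bx))$.

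Finally, I recognize the resulting expression as the evaluation of $\sigma(f)$ at $\sigma(\bx)$. By the definition of the algebraic group action, $\sigma(f) = \sum_M \sigma(\coeff_M(f)) \cdot M$, and evaluating this polynomial at $\sigma(\bx)$ yields precisely
\begin{align*}
\sigma(f)(\sigma(\bx)) = \sum_{M \in \Mon(n)} \sigma(\coeff_M(f)) \cdot M(\sigma(\bx)),
\end{align*}
which matches the expression obtained above for $\sigma(f(\bx))$. I do not expect any genuine obstacle here; the only subtle point is remembering to verify that $\sigma(\bx)$ lies in $\V(\ad_{\vm})$ so that the right-hand side makes sense, but this is guaranteed by Definition \ref{DEF:geom_alpha} and can be invoked directly.
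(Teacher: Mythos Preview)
Your proposal is correct and follows essentially the same approach as the paper: expand $f(\bx)$ over monomials, push $\sigma$ through sums and products using that it is a field automorphism, use $\sigma(M(\bx)) = M(\sigma(\bx))$, and identify the result as $\sigma(f)(\sigma(\bx))$. The paper's proof is just a terser version of your computation, and your extra remark about $\sigma(\bx) \in \V(\ad_{\vm})$ is a helpful sanity check that the paper leaves implicit.
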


\begin{proof}
Since $\sigma$ as an automorphism splits under addition and multiplication of elements in $k_{\vm}$, we have that
\begin{align*}
\sigma(f(\bx))
&= \sigma \left( \sum_{M \in \Mon(n)} \coeff_M(f) \cdot M(\bx) \right) \\
&= \sum_{M \in \Mon(n)} \sigma(\coeff_M(f)) \cdot \sigma(M(\bx)) \\
&= \sum_{M \in \Mon(n)} \sigma(\coeff_M(f)) \cdot M(\sigma(\bx)) \\
&= \sigma(f)(\sigma(\bx)),
\end{align*}
which concludes the proof.
\end{proof}

\begin{corollary}
\label{COR:invar_f}
Let $f \in \CR_{\vm / k_{\vm}}$ with all coefficients in $k$. 
Then for all $\sigma \in \Gal(k_{\vm} / k)$, the identity $\sigma(f(\bx)) = f (\sigma(\bx))$ holds.
\end{corollary}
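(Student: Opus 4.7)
The plan is to derive this as a direct specialization of Lemma \ref{LEM:actionID}. That lemma gives the general identity $\sigma(f(\bx)) = \sigma(f)(\sigma(\bx))$ for any $f \in \CR_{\vm / k_{\vm}}$, so the task reduces to showing that under the additional hypothesis, $\sigma(f) = f$.

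First I would unpack the definition of the algebraic group action: by definition, $\sigma(f) = \sum_{M \in \Mon(n)} \sigma(\coeff_M(f)) \cdot M$. The hypothesis that all coefficients of $f$ lie in $k$ means $\coeff_M(f) \in k$ for every monomial $M$. Since $\sigma \in \Gal(k_{\vm} / k)$, by the defining property of the Galois group of the extension $k_{\vm}/k$, $\sigma$ fixes every element of $k$ pointwise. Hence $\sigma(\coeff_M(f)) = \coeff_M(f)$ for all $M$, which yields $\sigma(f) = f$. (Alternatively, one may invoke Remark \ref{REMARK:Gal-invariant}, which identifies the $\Gal(k_{\vm}/k)$-invariants of $\CR_{\vm / k_{\vm}}$ with $\CR_{\vm / k}$; since $f$ has coefficients in $k$ it belongs to this invariant subring.)

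Finally, substituting $\sigma(f) = f$ into the identity of Lemma \ref{LEM:actionID} produces
\begin{align*}
\sigma(f(\bx)) \;=\; \sigma(f)(\sigma(\bx)) \;=\; f(\sigma(\bx)),
\end{align*}
which is the desired conclusion. There is no real obstacle here; the corollary is essentially a one-line bookkeeping consequence, and the only thing to be careful about is invoking the correct definition of the algebraic action and the fact that $k$ is fixed pointwise by $\Gal(k_{\vm}/k)$.
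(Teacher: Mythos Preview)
Your proof is correct and follows exactly the intended approach: the paper states this result as an immediate corollary of Lemma~\ref{LEM:actionID} without a separate proof, and your argument---observing that $\sigma(f) = f$ because $\sigma$ fixes the coefficients in $k$, then substituting into the identity of Lemma~\ref{LEM:actionID}---is precisely the implicit one-line justification.
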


\begin{lemma}
Let $\by \in \V(\ad_{\vm})$ and let $f \in \CR_{\vm / k}$.
Then 
\begin{align*}
\prod_{\bx \in \Orb(\by)} f(\bx) \in k.
\end{align*}
\end{lemma}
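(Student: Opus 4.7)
The plan is to show that the product lies in $k_{\vm}$ and is fixed by every element of $\Gal(k_{\vm}/k)$, and then to invoke the fundamental theorem of Galois theory to conclude that it must lie in $k$.

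First I would observe that the product lies in $k_{\vm}$. Indeed, by the preceding lemma $\V(\ad_{\vm}) = \mu_{m_1} \times \ldots \times \mu_{m_n}$, so every coordinate of every $\bx \in \Orb(\by) \subseteq \V(\ad_{\vm})$ lies in $k_{\vm}$ by Definition \ref{DEF:k_m}. Since $f$ has all coefficients in $k \subseteq k_{\vm}$, each $f(\bx) \in k_{\vm}$, and hence so does the product.

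Next I would fix an arbitrary $\sigma \in \Gal(k_{\vm}/k)$ and compute
\begin{align*}
\sigma\left(\prod_{\bx \in \Orb(\by)} f(\bx)\right)
= \prod_{\bx \in \Orb(\by)} \sigma(f(\bx))
= \prod_{\bx \in \Orb(\by)} f(\sigma(\bx)),
\end{align*}
where the first equality uses that $\sigma$ is a ring automorphism and the second uses Corollary \ref{COR:invar_f}, which applies because $f \in \CR_{\vm / k}$. Now the key observation is that the map $\bx \mapsto \sigma(\bx)$ restricts to a bijection $\Orb(\by) \to \Orb(\by)$: indeed, $\sigma(\bx) \in \Orb(\by)$ because $\sigma(\bx)$ is in the same $\Gal(k_{\vm}/k)$-orbit as $\bx$, and $\sigma$ acts bijectively on $\V(\ad_{\vm})$ by the geometric group action $\alpha_{\vm / k}$ (Definition \ref{DEF:geom_alpha}). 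Reindexing the product along this permutation therefore yields
\begin{align*}
\prod_{\bx \in \Orb(\by)} f(\sigma(\bx)) = \prod_{\bx \in \Orb(\by)} f(\bx),
\end{align*}
so the product is invariant under $\sigma$.

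Since $\sigma \in \Gal(k_{\vm}/k)$ was arbitrary, the product lies in the fixed field of $\Gal(k_{\vm}/k)$, which by the fundamental theorem of Galois theory (as cited in Remark \ref{REMARK:Gal-invariant}) is exactly $k$. There is no real obstacle in this argument; the only point worth stating cleanly is that orbits are by definition closed under the group action, which is what makes the reindexing legal.
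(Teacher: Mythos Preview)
Your proof is correct and follows essentially the same approach as the paper: show that each $\sigma \in \Gal(k_{\vm}/k)$ permutes $\Orb(\by)$, use Corollary~\ref{COR:invar_f} to pass $\sigma$ inside $f$, reindex, and invoke the fundamental theorem of Galois theory. The only difference is that you make the preliminary observation that the product lies in $k_{\vm}$ explicit, which the paper leaves implicit.
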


\begin{proof}
Observe that any automorphism $\sigma \in \Gal(k_{\vm} / k)$ induces a natural bijection $\Orb(\by) \to \Orb(\by), \ \bx \mapsto \sigma(\bx)$.
Thus we have  
\begin{align*}
\sigma \left( \prod_{\bx \in \Orb(\by)} f(\bx) \right)
= \prod_{\bx \in \Orb(\by)} f(\sigma(\bx))
= \prod_{\sigma(\bx) \in \Orb(\by)} f(\bx)
= \prod_{\bx \in \Orb(\by)} f(\bx).
\end{align*}
From the fundamental theorem of Galois theory, we conclude that the expression $\prod_{\bx \in \Orb(\by)} f(\bx)$ is indeed contained in $k$.
\end{proof}

\begin{lemma}
For $\bx \in \V(\ad_{\vm})$, we have $[k(\bx) : k] = \# \Orb(\bx)$.
\end{lemma}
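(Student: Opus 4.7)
The plan is to identify $[k(\bx):k]$ with an index of Galois subgroups via the fundamental theorem of Galois theory, and then recognize that same index as the size of the orbit via the orbit-stabilizer theorem.

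First I would observe that since each coordinate $x_i$ of $\bx$ lies in $\mu_{m_i} \subset k_{\vm}$, the intermediate field $k(\bx)$ sits inside $k_{\vm}$. Because $k_{\vm}/k$ is a Galois extension (Definition \ref{DEF:k_m} and the standard theory of cyclotomic extensions), the extension $k_{\vm}/k(\bx)$ is automatically Galois as well, and the fundamental theorem of Galois theory gives a subgroup $H := \Gal(k_{\vm}/k(\bx)) \leq \Gal(k_{\vm}/k)$ with $[k(\bx):k] = [\Gal(k_{\vm}/k) : H]$.

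Next I would identify $H$ with the stabilizer of $\bx$ under $\alpha_{\vm/k}$. By definition of the geometric action, $\sigma$ stabilizes $\bx$ if and only if $\sigma(x_i) = x_i$ for each $i$, i.e.\ $\sigma$ fixes every generator of $k(\bx)$ over $k$. Since a $k$-automorphism is determined by its action on a generating set, this is equivalent to $\sigma$ fixing all of $k(\bx)$ pointwise, which is exactly the condition $\sigma \in H$. Hence $\Stab(\bx) = H$.

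Finally, the orbit-stabilizer theorem applied to $\alpha_{\vm/k}$ yields
\begin{align*}
\#\Orb(\bx) = [\Gal(k_{\vm}/k) : \Stab(\bx)] = [\Gal(k_{\vm}/k) : H] = [k(\bx):k],
\end{align*}
which completes the argument. There is no real obstacle here; the only point requiring mild care is verifying that $k(\bx) \subseteq k_{\vm}$ so that the fundamental theorem of Galois theory actually applies to the intermediate field, and this is immediate from the containments $x_i \in \mu_{m_i} \subset k_{\vm}$.
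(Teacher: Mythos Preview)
Your argument is correct and follows essentially the same route as the paper's own proof, which is extremely terse: the paper simply notes that $k(\bx)/k$ is a cyclotomic (hence Galois) extension and invokes the fundamental theorem of Galois theory without further comment. Your version spells out what the paper leaves implicit --- identifying $\Gal(k_{\vm}/k(\bx))$ with the stabilizer of $\bx$ under $\alpha_{\vm/k}$ and then applying the orbit--stabilizer theorem to equate the resulting index with $\#\Orb(\bx)$.
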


\begin{proof}
The field extension $k(\bx) / k$ is a cyclotomic extension, which is always a Galois extension.
The result now follows directly from the fundamental theorem of Galois theory.
\end{proof}

\subsection{Wedderburn decomposition}

We present the full Wedderburn decomposition of semisimple cyclotomic rings.

\begin{theorem}
\label{THM:delta}
Let $\by \in \V(\ad_{\vm})$, then there exists a polynomial $\delta_{\Orb(\by)} \in \CR_{\vm / k}$ such that
\begin{align*}
\delta_{\Orb(\by)}(\bx) = 
\begin{cases}
1 \text{ if } \bx \in \Orb(\by) \\
0 \text{ else}.
\end{cases}
\end{align*}
\end{theorem}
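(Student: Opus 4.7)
The plan is to first construct over the larger field $k_{\vm}$ a ``pointwise'' indicator polynomial $\delta_{\bx}$ for each single point $\bx \in \V(\ad_{\vm})$, and then symmetrize over the orbit of $\by$ to descend to $\CR_{\vm / k}$. Since $\mu_{m_i} \subset k_{\vm}$ by the definition of $k_{\vm}$, Theorem \ref{THM:decom_prelude} applies to the base field $k_{\vm}$ and gives a ring isomorphism
\begin{align*}
\tau_{\vm / k_{\vm}} \colon \CR_{\vm / k_{\vm}} \to k_{\vm}^{\oplus \V(\ad_{\vm})}.
\end{align*}
For each $\bx \in \V(\ad_{\vm})$, I would let $\delta_{\bx} \in \CR_{\vm / k_{\vm}}$ be the preimage under $\tau_{\vm / k_{\vm}}$ of the standard basis vector supported at $\bx$; by construction $\delta_{\bx}(\bx) = 1$ and $\delta_{\bx}(\bx') = 0$ for all other $\bx' \in \V(\ad_{\vm})$. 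Then I would define the candidate
\begin{align*}
\delta_{\Orb(\by)} := \sum_{\bx \in \Orb(\by)} \delta_{\bx} \;\in\; \CR_{\vm / k_{\vm}},
\end{align*}
whose evaluation properties are immediate from those of the $\delta_{\bx}$.

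The only nontrivial point is to show $\delta_{\Orb(\by)}$ actually lies in $\CR_{\vm / k}$, not merely in $\CR_{\vm / k_{\vm}}$. By Remark \ref{REMARK:Gal-invariant} this reduces to proving that $\delta_{\Orb(\by)}$ is fixed by the algebraic action of every $\sigma \in \Gal(k_{\vm}/k)$. The key intermediate claim is that $\sigma(\delta_{\bx}) = \delta_{\sigma(\bx)}$ for each $\bx \in \V(\ad_{\vm})$. To verify this, I would apply Lemma \ref{LEM:actionID} to compute, for any $\bx' \in \V(\ad_{\vm})$, the value $\sigma(\delta_{\bx})(\sigma(\bx')) = \sigma(\delta_{\bx}(\bx'))$, which equals $1$ if $\bx' = \bx$ and $0$ otherwise. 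Since $\sigma$ permutes $\V(\ad_{\vm})$ via the geometric action, $\sigma(\delta_{\bx})$ therefore equals $1$ at $\sigma(\bx)$ and vanishes on the rest of $\V(\ad_{\vm})$, and by injectivity of $\tau_{\vm / k_{\vm}}$ this forces $\sigma(\delta_{\bx}) = \delta_{\sigma(\bx)}$.

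With the claim in hand, invariance of $\delta_{\Orb(\by)}$ is a bookkeeping step: since $\sigma$ restricts to a bijection of $\Orb(\by)$ by Definition \ref{DEF:geom_alpha},
\begin{align*}
\sigma(\delta_{\Orb(\by)}) = \sum_{\bx \in \Orb(\by)} \sigma(\delta_{\bx}) = \sum_{\bx \in \Orb(\by)} \delta_{\sigma(\bx)} = \sum_{\bx' \in \Orb(\by)} \delta_{\bx'} = \delta_{\Orb(\by)},
\end{align*}
so Remark \ref{REMARK:Gal-invariant} concludes that $\delta_{\Orb(\by)} \in \CR_{\vm / k}$. I expect the main obstacle to be precisely the identification $\sigma(\delta_{\bx}) = \delta_{\sigma(\bx)}$, which is where the three Galois actions (classical, algebraic, geometric) have to be correctly interlocked via Lemma \ref{LEM:actionID}; once this compatibility is secured, the rest of the argument is just linearity and the fact that $\Orb(\by)$ is a $\Gal(k_{\vm}/k)$-stable set.
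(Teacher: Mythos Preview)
Your proof is correct and follows essentially the same route as the paper: pull back the orbit indicator from $k_{\vm}^{\oplus \V(\ad_{\vm})}$ via the isomorphism $\tau_{\vm/k_{\vm}}$ of Theorem~\ref{THM:decom_prelude}, then use Lemma~\ref{LEM:actionID} and Remark~\ref{REMARK:Gal-invariant} to show this element is $\Gal(k_{\vm}/k)$-invariant and hence lies in $\CR_{\vm/k}$. The only cosmetic difference is that the paper pulls back the orbit indicator directly and verifies invariance by contradiction, whereas you decompose it as $\sum_{\bx\in\Orb(\by)}\delta_{\bx}$ and prove the clean identity $\sigma(\delta_{\bx})=\delta_{\sigma(\bx)}$; your direct argument is arguably tidier, but the substance is identical.
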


\begin{proof}
By Theorem \ref{THM:decom_prelude} we have the isomorphism
\begin{align*}
\tau_{\vm / k_{\vm}} \colon \CR_{\vm / k_{\vm}} \to k_{\vm}^{\oplus \V(\ad_{\vm})}, \ f \mapsto (f(\bx))_{\bx \in \V(\ad_{\vm})}.
\end{align*}
As such, there exists a polynomial $g \in \CR_{\vm / k_{\vm}}$ satisfying 
\begin{align*}
g(\bx) = 
\begin{cases}
1 \text{ if } \bx \in \Orb(\by) \\
0 \text{ else}.
\end{cases}
\end{align*}
We will show that $g \in \CR_{\vm / k}$.

Assume to the contrary that $g \notin \CR_{\vm / k}$.
Then there exists a coefficient $c$ of $g$ such that $c \notin k$.
Remark \ref{REMARK:Gal-invariant} then implies that there exists a $\sigma \in \Gal(k_{\vm} / k)$ such that $\sigma(g) \neq g$ in $\CR_{\vm / k_{\vm}}$.
The isomorphism $\tau_{\vm / k_{\vm}}$ implies that there exists $\bx \in \V(\ad_{\vm})$ such that 
\begin{align}
\label{EQ:sigma(g)_g}
\sigma(g)(\bx) \neq g(\bx).
\end{align}
By Lemma \ref{LEM:actionID}, we have that $\sigma(g(\bx)) = \sigma(g) (\sigma(\bx))$.
Since $\bx$ and $\sigma(\bx)$ are in the same orbit, we get $\sigma(g)(\sigma(\bx)) = \sigma(g)(\bx)$ because $g$, and thus also $\sigma(g)$, is constant over orbits with values in $k$.
This results into the equation $\sigma(g(\bx)) =  \sigma(g)(\bx)$.
Note however that since $g(\bx) \in k$, we have $\sigma(g(\bx)) = g(\bx)$, which results into the equation $\sigma(g)(\bx) = g(\bx)$.
This is a contradiction to Eq. (\ref{EQ:sigma(g)_g}).
Hence all coefficients of $g$ are contained in $k$, which means that $g \in f \in \CR_{\vm / k}$.
Choosing $\delta_{\Orb(\by)} = g$ concludes the proof.
\end{proof}

\begin{lemma}
\label{LEM:exist_poly}
Let $\bx \in \V(\ad_{\vm})$ and let $a \in k(\bx)$.
Then there exists a polynomial $p_{a, \bx} \in \CR_{\vm / k}$ such that $p_{a, \bx} (\bx) = a$.
\end{lemma}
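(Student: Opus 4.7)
My plan is to mimic the proof of Theorem~\ref{THM:delta}: first build a polynomial $g$ over the larger field $k_{\vm}$ with the correct value at $\bx$ (and suitably chosen values elsewhere), then argue via Galois invariance that $g$ actually has coefficients in $k$.

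First I would specify the values of $g$ on $\V(\ad_{\vm})$. On $\V(\ad_{\vm}) \setminus \Orb(\bx)$ I set $g$ to be zero, and on $\Orb(\bx)$ I define $g$ by transport under the geometric action: for each $\bx' \in \Orb(\bx)$, pick any $\sigma \in \Gal(k_{\vm}/k)$ with $\sigma(\bx) = \bx'$, and set $g(\bx') := \sigma(a)$. A short well-definedness check is required here: if $\sigma(\bx) = \tau(\bx)$, then $\tau^{-1}\sigma$ stabilises $\bx$, hence fixes each coordinate $x_i$ as well as every element of $k$, so by the fundamental theorem of Galois theory it fixes all of $k(\bx)$ pointwise; since $a \in k(\bx)$, this forces $\sigma(a) = \tau(a)$. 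This is the step that crucially uses the hypothesis $a \in k(\bx)$.

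Next I would invoke Theorem~\ref{THM:decom_prelude} applied to $\CR_{\vm / k_{\vm}}$ — valid because $k_{\vm}$ by construction contains $\mu_{m_i}$ for every $i$ — to obtain a unique polynomial $g \in \CR_{\vm / k_{\vm}}$ realising the prescribed values. It then remains to show that $g$ in fact lies in $\CR_{\vm / k}$, which by Remark~\ref{REMARK:Gal-invariant} amounts to showing $\tau(g) = g$ in $\CR_{\vm / k_{\vm}}$ for every $\tau \in \Gal(k_{\vm}/k)$. Using the injectivity of $\tau_{\vm / k_{\vm}}$, this reduces to a pointwise check on $\V(\ad_{\vm})$, and Lemma~\ref{LEM:actionID} lets me rewrite $\tau(g)(\bz) = \tau(g(\tau^{-1}(\bz)))$. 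A case split on whether $\bz \in \Orb(\bx)$, together with the definition of $g$, shows both sides agree: orbits are preserved by $\tau^{-1}$, so both vanish off $\Orb(\bx)$, and on the orbit the prescription $g(\tau^{-1}\sigma(\bx)) = \tau^{-1}\sigma(a)$ pushes forward under $\tau$ precisely to $\sigma(a) = g(\sigma(\bx))$.

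The main obstacle I anticipate is the well-definedness step in paragraph two: it is the one place where $a \in k(\bx)$ is actually used, and without it the prescription of values on $\Orb(\bx)$ would be ambiguous. The Galois-descent argument from $\CR_{\vm / k_{\vm}}$ down to $\CR_{\vm / k}$ is then essentially a template copy of the argument already carried out in Theorem~\ref{THM:delta}. After all this, setting $p_{a, \bx} := g$ gives $p_{a, \bx}(\bx) = a$ by construction, completing the proof.
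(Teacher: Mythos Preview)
Your argument is correct, but it follows a genuinely different route from the paper. The paper proves the lemma by a short induction on $n$: for $n=1$ one writes $a \in k(x_1)$ as $a = \sum_i c_i x_1^i$ with $c_i \in k$ and takes $p_{a,x_1}(X_1) = \sum_i c_i X_1^i$; for the step from $j$ to $j+1$ one writes $a \in k(x_1,\ldots,x_j)(x_{j+1})$ as a polynomial in $x_{j+1}$ with coefficients in $k(x_1,\ldots,x_j)$ and invokes the induction hypothesis on each coefficient. No Galois theory, no appeal to Theorem~\ref{THM:decom_prelude}.

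Your Galois-descent approach is more conceptual: it recycles the template of Theorem~\ref{THM:delta} and in fact subsumes that theorem (set $a = 1$). It also yields a stronger conclusion than the statement asks for, namely a $p_{a,\bx}$ that already vanishes off $\Orb(\bx)$, which would let one bypass the product $p_{F(\by),\by} \cdot \delta_{\Orb(\by)}$ in Theorem~\ref{THM:polyconstruct}. The trade-off is that your proof leans on the isomorphism $\tau_{\vm/k_{\vm}}$ and on Remark~\ref{REMARK:Gal-invariant}, whereas the paper's inductive argument is entirely elementary and self-contained. The well-definedness step you flagged is indeed the crux and is handled correctly: the stabiliser of $\bx$ in $\Gal(k_{\vm}/k)$ fixes each $x_i$ and hence all of $k(\bx)$, so $a \in k(\bx)$ forces $\sigma(a) = \tau(a)$ whenever $\sigma(\bx) = \tau(\bx)$.
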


\begin{proof}
Let us use the expression $\bx = (x_1, \ldots ,x_n)$.
We proceed by induction on $n$.
For $n = 1$, we have by basic field theory that there exists unique elements $c_0,c_1, \ldots ,c_t$ in $k$ with $t < [k(x_1) : k]$ such that 
\begin{align*}
a = c_0 + c_1 \cdot x_1 + c_2 \cdot x_1^2 + \ldots + c_t \cdot x_1^t.
\end{align*}
Hence the polynomial 
\begin{align*}
p_{a, x_1} (X_1) := \sum_{i=0}^t c_i \cdot X_1^i,
\end{align*}
satisfies the assumption, thus proving the case for $n = 1$.

Now assume the lemma is true for $n = j$ for some integer $j > 1$, and consider $n = j+1$.
Observe that $k(x_1, \ldots x_j, x_{j+1}) = k(x_1, \ldots x_j)(x_{j+1})$.
With a similar argument as in the case for $n = 1$, there exists unique elements $c_0,c_1, \ldots ,c_t$ in $k(x_1, \ldots ,x_j)$ with $t < [k(x_1, \ldots, x_j)(x_{j+1}) : k(x_1, \ldots, x_k)]$ such that 
\begin{align*}
a = c_0 + c_1 \cdot x_{j+1} + c_2 \cdot x_{j+1}^2 + \ldots + c_t \cdot x_{j+1}^t.
\end{align*}
By the induction hypotheses, there exist polynomials $p_0, \ldots ,p_t \in k[X_1, \ldots ,X_j]$ such that $p_i(x_1, \ldots ,x_j) = c_i$ for all $0 \leq i \leq t$.
Hence the polynomial
\begin{align*}
p_{a, \bx} = \sum_{i=0}^t p_i(X_1, \ldots ,X_j) \cdot X_{j+1}^t,
\end{align*}
satisfies our assumptions, thus concluding the proof.
\end{proof}

\begin{theorem}
\label{THM:polyconstruct}
Let $S \subset \V(\ad_{\vm})$ be a set of representatives of the orbits of $\alpha_{\vm / k}$.
Assume that we have a map $F : S \to \ovl{k}$ such that $F(\bx) \in k(\bx)$ for all $\bx \in S$.
Then there exists a polynomial $f \in A^n_k$ such that $f(\bx) = F(\bx)$.
\end{theorem}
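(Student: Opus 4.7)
The plan is to glue together the two constructions just established: Theorem \ref{THM:delta} provides an orbit-indicator polynomial $\delta_{\Orb(\bx)} \in \CR_{\vm / k}$ for each orbit of $\alpha_{\vm/k}$, while Lemma \ref{LEM:exist_poly} lets me realize any prescribed value in $k(\bx)$ as $p(\bx)$ for some $p \in \CR_{\vm/k}$. Summing orbit-wise products of these two ingredients will yield the desired polynomial $f$.

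In more detail, I would first, for each representative $\bx \in S$, apply Lemma \ref{LEM:exist_poly} to the element $F(\bx) \in k(\bx)$ (this is exactly where the hypothesis $F(\bx) \in k(\bx)$ gets used) to produce a polynomial $p_{F(\bx), \bx} \in \CR_{\vm / k}$ with $p_{F(\bx), \bx}(\bx) = F(\bx)$. Next, Theorem \ref{THM:delta} gives $\delta_{\Orb(\bx)} \in \CR_{\vm / k}$ that equals $1$ on $\Orb(\bx)$ and $0$ on $\V(\ad_{\vm}) \setminus \Orb(\bx)$. I then define
\begin{align*}
f := \sum_{\bx \in S} p_{F(\bx), \bx} \cdot \delta_{\Orb(\bx)} \in \CR_{\vm / k},
\end{align*}
and take its standard representative in $A^n_k$ via Proposition \ref{PROP:representatives}; this representative automatically has coefficients in $k$ because $\CR_{\vm / k}$ is closed under sums and products.

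The verification of $f(\by) = F(\by)$ for each $\by \in S$ is then a one-line check: since the orbits indexed by $S$ partition $\V(\ad_{\vm})$, the factor $\delta_{\Orb(\bx)}(\by)$ vanishes unless $\bx = \by$, leaving $f(\by) = p_{F(\by),\by}(\by) = F(\by)$. I do not anticipate any genuine obstacle, since both ingredients have already been established; the only thing to watch is the bookkeeping that keeps all intermediate polynomials inside $\CR_{\vm / k}$, so that the final lifted polynomial is guaranteed to lie in $A^n_k$ rather than merely in $A^n_{k_{\vm}}$.
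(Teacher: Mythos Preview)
Your proposal is correct and matches the paper's proof essentially line for line: the paper also defines $f = \sum_{\by \in S} p_{F(\by),\by}\cdot \delta_{\Orb(\by)}$ using Lemma~\ref{LEM:exist_poly} and Theorem~\ref{THM:delta}, and verifies $f(\bx)=F(\bx)$ by the same indicator argument. Your extra remark about lifting to a standard representative in $A^n_k$ is a harmless clarification the paper leaves implicit.
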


\begin{proof}
Consider the polynomial
\begin{align*}
f =  \sum_{\by \in S} p_{F(\by), \by} \cdot \delta_{\Orb(\by)},
\end{align*}
with $p_{F(\by),\by}$ as in Lemma \ref{LEM:exist_poly} and $\delta_{\Orb(\by)}$ as in Theorem \ref{THM:delta}.
Then for any $\bx \in S$, we have
\begin{align*}
f(\bx) 
= \sum_{\by \in S} p_{F(\by) , \by}(\bx) \cdot \delta_{\Orb(\by)}(\bx)
= p_{F(\bx), \bx}(\bx) \cdot \delta_{\Orb(\bx)}(\bx)
= F(\bx) \cdot 1
= F(\bx),
\end{align*}
where the second equality is because $\delta_{\Orb(\by)} (\bx) = 0$ for $\by \neq \bx$.
This concludes the proof.
\end{proof}

\begin{theorem}[\textbf{Wedderburn decomposition}]
\label{THM:GenDec}
Let $k$ be a field, and let $\CR_{\vm / k}$ be semisimple.
Let $S \subset \V(\ad_{\vm})$ be a set of representatives of the orbits of $\alpha_{\vm / k}$.
Then we have the isomorphism
\begin{align}
\label{EQ:GenDec}
\CR_{\vm / k} \to \bigoplus_{\bx \in S} k(\bx), \ f \mapsto (f(\bx))_{\bx \in S},
\end{align}
where $[k(\bx) : k] = \# \Orb(\bx)$.
\end{theorem}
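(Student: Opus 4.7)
The plan is to verify that the evaluation map in Eq.~(\ref{EQ:GenDec}) is a well-defined ring homomorphism, then prove injectivity by leveraging the Galois-equivariance of evaluation (Corollary~\ref{COR:invar_f}) together with the ``large field'' case already established in Theorem~\ref{THM:decom_prelude}, and finally prove surjectivity as a direct consequence of Theorem~\ref{THM:polyconstruct}. The dimension statement $[k(\bx):k] = \#\Orb(\bx)$ is already given by the lemma just before Subsection~3.3, so nothing new is needed there.

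For well-definedness, note that any $f \in \CR_{\vm/k}$ has coefficients in $k$, so for $\bx \in \V(\ad_{\vm})$ the value $f(\bx)$ is a polynomial expression in the coordinates of $\bx$ with coefficients in $k$ and therefore lies in $k(\bx)$. Evaluation at each $\bx$ is a ring homomorphism, and collecting these coordinate-wise gives a ring homomorphism from $\CR_{\vm/k}$ into $\bigoplus_{\bx \in S} k(\bx)$; it factors through $\CR_{\vm/k}$ because every generator $X_i^{m_i}-1$ of $\ad_{\vm}$ vanishes on $\V(\ad_{\vm})$.

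For injectivity, suppose $f \in \CR_{\vm/k}$ satisfies $f(\bx) = 0$ for every $\bx \in S$. By Corollary~\ref{COR:invar_f}, for any $\sigma \in \Gal(k_{\vm}/k)$ and any $\bx \in S$ we have $f(\sigma(\bx)) = \sigma(f(\bx)) = 0$. Since $S$ is a transversal for the orbits of $\alpha_{\vm/k}$, every element of $\V(\ad_{\vm})$ has the form $\sigma(\bx)$ for some $\sigma$ and some $\bx \in S$, so $f$ vanishes on all of $\V(\ad_{\vm})$. Viewing $f$ inside $\CR_{\vm/k_{\vm}}$ and applying Theorem~\ref{THM:decom_prelude} over the enlarged base field $k_{\vm}$ (which by construction contains all relevant roots of unity), the injectivity of $\tau_{\vm/k_{\vm}}$ forces $f = 0$ in $\CR_{\vm/k_{\vm}}$, hence also in $\CR_{\vm/k}$.

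For surjectivity, given any tuple $(a_{\bx})_{\bx \in S} \in \bigoplus_{\bx \in S} k(\bx)$, define $F \colon S \to \ovl{k}$ by $F(\bx) := a_{\bx}$; by assumption $F(\bx) \in k(\bx)$, which is exactly the hypothesis of Theorem~\ref{THM:polyconstruct}. That theorem produces a polynomial $f \in A^n_k$ with $f(\bx) = a_{\bx}$ for every $\bx \in S$, and its image in $\CR_{\vm/k}$ maps to the prescribed tuple. The dimension statement $[k(\bx):k] = \#\Orb(\bx)$ is inherited verbatim from the earlier lemma. I do not expect a serious obstacle: all three constructive tools (Nullstellensatz-based evaluation isomorphism over $k_{\vm}$, Galois-equivariance, and the interpolation result of Theorem~\ref{THM:polyconstruct}) are already in place, and the proof is essentially an assembly. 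The only point requiring mild care is the step in injectivity where one climbs from $k$ to $k_{\vm}$ in order to apply Theorem~\ref{THM:decom_prelude}, but this is legitimate because $\CR_{\vm/k}$ sits inside $\CR_{\vm/k_{\vm}}$ as the $\Gal(k_{\vm}/k)$-invariants (Remark~\ref{REMARK:Gal-invariant}).
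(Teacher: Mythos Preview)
Your proposal is correct and follows essentially the same route as the paper: well-definedness by $f(\bx)\in k(\bx)$, injectivity by using Corollary~\ref{COR:invar_f} to pass from vanishing on $S$ to vanishing on all of $\V(\ad_{\vm})$ and then invoking the injectivity of $\tau_{\vm/k_{\vm}}$ from Theorem~\ref{THM:decom_prelude}, and surjectivity via Theorem~\ref{THM:polyconstruct}. The only cosmetic slip is in your well-definedness paragraph, where you write ``a ring homomorphism from $\CR_{\vm/k}$ \ldots\ factors through $\CR_{\vm/k}$''; you mean the evaluation map is first defined on $A^n_k$ and then factors through $\CR_{\vm/k}$.
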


\begin{proof}
Since $\tau_{\vm / k_{\vm}}$ from Theorem \ref{THM:decom_prelude} is an isomorphism, $\tau_{\vm / k_{\vm}} \mid_{\CR_{\vm / k}}$ (restricted to $\CR_{\vm / k}$) is an injective homomorphism.
Using Corollary \ref{COR:invar_f}, we can refine $\tau_{\vm / k_{\vm}} \mid_{\CR_{\vm / k}}$  to the injective homomorphism
\begin{align}
\label{EQ:GenDec1}
\CR_{\vm / k} \to k_{\vm}^{\oplus S}, \ f \mapsto (f(\by))_{\by \in S}.
\end{align}
Clearly, $f(\by) \in k(\by)$ for all $\by \in S$ and $f \in A^n_k$.
If given elements $a_{\by} \in k(\by)$ for each $\by \in S$, we conclude from Theorem \ref{THM:polyconstruct} that there exists a polynomial $f \in A^n_k$ such that $f(\by) = a_{\by}$.
Hence (\ref{EQ:GenDec1}) induces the desired isomorphism (\ref{EQ:GenDec}).
\end{proof}

\begin{remark}
The simple components in de decomposition are isomorphic to cyclotomic field extensions of $k$.
This means in particular that $\CR_{\vm / k}$ is an \'etale algebra over $k$ if it is semisimple. 
\end{remark}

\section{Wedderburn decomposition over finite fields}
\label{SECTION:orbit}

In this section, we present the Wedderburn decomposition over semisimple circulant rings over finite fields.
The decomposition over finite fields is particularly well-behaved, due to the good understanding of field extensions of finite fields.
For the remainder of this section, we assume by default that for a finite field $\F_q$ and $\vm := (m_1, \ldots, m_n)$ whose entries are all coprime to $q$, or equivalently when $\CR_{\vm / \F_q}$ is semisimple.

\subsection{Some additional notation}

We introduce some additional notation, building on $\Z_m$ defined under Number theory in Section \ref{SUBSECTION:Notation}.
We denote the multiplicative group of $\Z_m$ by $\Z_m^*$.
The size of $\Z_m^*$ is denoted $\varphi(m)$, which is known as Euler's totient function.
For $a \in \Z_m^*$, we denote the multiplicative order of $a$ by $\ord_m(a)$.
We define $\Div_m$ as the set of all integers dividing $m$, including $1$ and $m$ itself.

Let $\vm = (m_1, \ldots, m_n)$ be an $n$-tuple of positive integers.
We define the product ring $\Z_{\vm} := \Z_{m_1} \times \ldots \times \Z_{m_n}$.
As such, the multiplicative group of $\Z_{\vm}$, which we denote $\Z^*_{\vm}$, equals $\Z^*_{m_1} \times \ldots \times \Z^*_{m_n}$.
From elementary group theory, the order of $\Z^*_{\vm}$ equals $\prod_{i=1}^n \varphi(m_i)$.
Lastly, we define $\Div_{\vm}$ as the direct product $\Div_{m_1} \times \ldots \times \Div_{m_n}$.

\subsection{Some modular arithmetic}

\begin{definition}
\label{DEF:multivar}
For $\vd = (d_1, \ldots, d_n) \in \Div_{\vm}$, we define the set 
\begin{align*}
\Z_{\vm}(\vd) := \left\{ (x_1, \ldots, x_n) \in \Z_{\vm} : \gcd(x_i, m_i) = d_i \text{ for all } 1 \leq i \leq n \right\},
\end{align*}
\end{definition}

\begin{proposition}
\label{PROP:disjoint_covering}
We have the disjoint union $\Z_{\vm} = \bigcupdot_{\vd \in \Div_{\vm}} \Z_{\vm}(\vd)$.
\end{proposition}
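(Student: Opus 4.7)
The plan is to realize the decomposition as the fibre partition of a single well-defined map. Define
\begin{align*}
\phi \colon \Z_{\vm} \to \Div_{\vm}, \ (x_1, \ldots, x_n) \mapsto (\gcd(x_1, m_1), \ldots, \gcd(x_n, m_n)),
\end{align*}
where each $x_i$ is understood via the integer representation $x_i \in \{0, 1, \ldots, m_i - 1\}$ introduced in Section \ref{SUBSECTION:Notation}. Once $\phi$ is shown to be well-defined, the proposition is immediate from the tautology $\Z_{\vm}(\vd) = \phi^{-1}(\vd)$: every element of $\Z_{\vm}$ lies in exactly one preimage, so the family $\{\Z_{\vm}(\vd)\}_{\vd \in \Div_{\vm}}$ is automatically a disjoint cover.

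The only thing that needs checking is that $\phi$ lands in $\Div_{\vm}$, i.e.\ that $\gcd(x_i, m_i) \in \Div_{m_i}$ for every coordinate. For $x_i \in \{1, \ldots, m_i - 1\}$ this is the standard fact that the gcd of two positive integers divides each of them. The only borderline case is $x_i = 0$, where we adopt the convention $\gcd(0, m_i) = m_i$, which lies in $\Div_{m_i}$ by the explicit inclusion of $m_i$ itself in the definition of $\Div_{m_i}$.

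The disjointness half of the union then reduces to the observation that if $(x_1, \ldots, x_n) \in \Z_{\vm}(\vd) \cap \Z_{\vm}(\vd')$, the definition of $\Z_{\vm}(\cdot)$ forces $d_i = \gcd(x_i, m_i) = d_i'$ for each $i$, so $\vd = \vd'$. The covering half follows by assigning to an arbitrary $\bx = (x_1, \ldots, x_n) \in \Z_{\vm}$ the tuple $\vd := \phi(\bx) \in \Div_{\vm}$, which by construction satisfies $\bx \in \Z_{\vm}(\vd)$.

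I do not anticipate any genuine obstacle here; the statement is essentially a bookkeeping lemma. The only subtlety worth flagging in the write-up is the convention $\gcd(0, m_i) = m_i$, since $0$ is a legitimate element of $\Z_{m_i}$ under the integer representation convention, and without this convention the partition would miss the element $(0, \ldots, 0)$, which must belong to $\Z_{\vm}(\vm)$.
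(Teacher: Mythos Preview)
Your proof is correct and follows essentially the same approach as the paper: the paper also assigns to each $\bx$ the tuple $\vd^* = (\gcd(x_i,m_i))_i$ for covering, and argues disjointness by noting that membership in $\Z_{\vm}(\vd)$ pins down each $d_i$ as $\gcd(x_i,m_i)$. Your fibre-partition framing is just a cleaner packaging of the same argument, and your explicit remark on the convention $\gcd(0,m_i)=m_i$ is a point the paper leaves implicit.
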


\begin{proof}
First we show that the subsets $\Z_{\vm}(\vd)$ where $\vd \in \Div_{\vm}$ cover $\Z_{\vm}$.
Let $\bx = (x_1, \dots, x_n) \in \Z_{\vm}$, and let $\vd^* := (d^*_1, \ldots, d^*_n)$ where $d^*_i := \gcd(x_i, m_i)$.
Then by definition, $\bx \in \Z_{\vm}(d^*)$.
Since this is true for all $\bx \in \Z_{\vm}$, the subsets $\Z_{\vm}(d)$ indeed cover $\Z_{\vm}$.

We show that $\Z_{\vm}(\vd)$ and $\Z_{\vm}(\vd')$ are disjoint for $\vd \neq \vd'$.
Assume to the contrary that these sets are not disjoint.
Then there exists an $\bx = (x_1, \ldots, x_n) \in \Z_{\vm}$ contained in both sets.
Since $\vd \neq \vec{d'}$, there exists an $1 \leq i \leq n$ such that $d_i \neq d'_i$.
The assumption $x \in \Z_{\vm}(\vd)$ implies $\gcd(x_i, m_i) = d_i$, and similarly, $x \in \Z_{\vm}(\vd')$ implies $\gcd(x_i, m_i) = d'_i$.
But then $d_i = d'_i$, which is a contradiction.
Hence $\Z_{\vm}(\vd)$ and $\Z_{\vm}(\vd')$ must be disjoint.
\end{proof}

\begin{lemma}
\label{LEMMA:invertible_elements_bijection}
There is a natural bijection $\omega \colon \Z^*_{\vm / \vd} \to \Z_{\vm} (\vd), \ \vec{c} \mapsto \vd \cdot \vec{c}$, where $\vm / \vd := (m_1 / d_1, \ldots, m_n / d_n)$.
\end{lemma}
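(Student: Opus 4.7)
The plan is to verify in turn well-definedness, injectivity, and surjectivity of $\omega$, working componentwise since everything factors as a product over $1 \leq i \leq n$. The key underlying identity from elementary number theory I will invoke repeatedly is the \emph{scaling property} of gcd: for positive integers $a, b, d$ with $d \mid b$,
\begin{align*}
\gcd(d a, b) = d \cdot \gcd\!\left(a, b/d\right).
\end{align*}

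First, I would check well-definedness. Take $\vec{c} = (c_1, \ldots, c_n) \in \Z^*_{\vm/\vd}$, so that each representative satisfies $0 \leq c_i < m_i/d_i$ and $\gcd(c_i, m_i/d_i) = 1$. Then $0 \leq d_i c_i < m_i$, so $\vd \cdot \vec{c}$ has a legitimate representative in $\Z_{\vm}$. Applying the scaling identity with $a = c_i$ and $b = m_i$ gives $\gcd(d_i c_i, m_i) = d_i \gcd(c_i, m_i/d_i) = d_i$, so $\vd \cdot \vec{c} \in \Z_{\vm}(\vd)$.

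Next, injectivity is essentially a cancellation argument: if $\vd \cdot \vec{c} = \vd \cdot \vec{c}'$ in $\Z_{\vm}$, then for each $i$ both $d_i c_i$ and $d_i c_i'$ lie in $\{0, 1, \ldots, m_i - 1\}$ and are congruent modulo $m_i$, hence equal as integers; dividing by $d_i$ yields $c_i = c_i'$. For surjectivity, let $\bx = (x_1, \ldots, x_n) \in \Z_{\vm}(\vd)$. By definition $\gcd(x_i, m_i) = d_i$, so $d_i \mid x_i$ and we may write $x_i = d_i c_i$ with $0 \leq c_i < m_i/d_i$. The same scaling identity applied in reverse gives $d_i = \gcd(d_i c_i, m_i) = d_i \gcd(c_i, m_i/d_i)$, hence $\gcd(c_i, m_i/d_i) = 1$ and $\vec{c} \in \Z^*_{\vm/\vd}$ with $\omega(\vec{c}) = \bx$.

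There is no serious obstacle here; the entire argument is mechanical once the scaling identity for gcd is available. The only mild point of care is to keep the distinction between elements of $\Z_{m_i}$ (residue classes) and their canonical integer representatives, since the gcd is stated on integer representatives and the inequality $d_i c_i < m_i$ is what lets injectivity pass without further modular reduction.
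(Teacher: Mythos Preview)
Your proof is correct and follows essentially the same approach as the paper: both arguments work componentwise on integer representatives and rest on the gcd scaling identity $\gcd(d_i c_i, m_i) = d_i \gcd(c_i, m_i/d_i)$. The paper is terser—it simply writes down the sets of integer representatives on each side and asserts they match under $\vec{c} \mapsto \vd \cdot \vec{c}$—whereas you unpack this into separate well-definedness, injectivity, and surjectivity steps, but the mathematical content is identical.
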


\begin{proof}
The set of $n$-tuples of positive integers
\begin{align*}
\left\{ (d_1 \cdot c_1, \ldots, d_n \cdot c_n) \in \Z_{>0}^n : 1 \leq c_i \leq m_i / d_i \text{ and } \gcd(c_i,d_i) = 1  \right\},
\end{align*}
contains exactly the $n$-tuple integer representatives of all elements in $\Z_{\vm}$ in the subset $\Z_{\vm}(\vd)$.
Observe that the elements in the set 
\begin{align*}
\{ (c_1, \ldots, c_n) \in \Z_{>0}^n : 1 \leq c_i \leq m_i / d_i \text{ and } \gcd(c_i,d_i) = 1  \},
\end{align*}
are exactly the $n$-tuple integer representatives of $\Z_{\vm / \vd}^*$, which proves the claim.
\end{proof}

\begin{remark}
\label{REMARK:univar}
We state some trivial but useful facts:
\begin{itemize}
\item[1.]  	$\Z_{\vm}(\vm) = \{ \vec{0} \}$ and $\Z_{\vm}( \vec{1}) = \Z_{\vm}^*$ where $\vec{0} := (0 \bmod m_i)_{1 \leq i \leq n}$ and $\vec{1} := (1 \bmod m_i)_{1 \leq i \leq n}$;
\item[2.] 	By Lemma \ref{LEMMA:invertible_elements_bijection}, we have $\# \Z_{\vm}(\vd) = \# \Z^*_{\vm / \vd} = \prod_{i = 1}^n \varphi(m_i/d_i)$.
\end{itemize}
\end{remark}

\subsection{Modular group action}

We construct a group action which is equivalent to $\alpha_{\vm/ \F_q}$ (Definition \ref{DEF:geom_alpha}), but which is computationally more convenient.

\begin{definition}[\textbf{Modular group action}]
\label{DEF:mod_group_act}
Let $\vm := (m_1, \ldots, m_n)$ be an $n$-tuple of positive integers whose entries are coprime to $q$.
Define the element 
\begin{align*}
\vec{q} :=  (q \bmod m_1, \ldots, q \bmod m_n) \in \Z_{\vm}^*,
\end{align*}
which is well defined since $q$ and $m_i$ are coprime, and let $\langle \vec{q} \rangle$ be the cyclic subgroup of $\Z_{\vm}^*$ generated by $\vec{q}$.
For $\bx := (x_1, \ldots, x_n) \in \Z_{\vm}$ and for all $t \geq 1$, we define the \textbf{modular group action} as:
\begin{align*}
\alpha_{\vm / \F_q}^* \colon \langle \vec{q} \rangle \times \Z_{\vm} \to \Z_{\vm}, \ (\vec{q}^t, \bx) \mapsto (x_1 \cdot q^t \bmod m_1, \ldots, x_n \cdot q^t \bmod m_n).
\end{align*}
If $\vm$ and $\F_q$ are clear from context, we denote $\alpha_{\vm / \F_q}^*(\vec{q}^t, \bx)$ simply by $\bx \cdot \vec{q}^t$.  
\end{definition}

\begin{theorem}
\label{THM:equivalence_group_actions}
The group actions $\alpha_{\vm/k}^*$ and $\alpha_{\vm/k}$ from Definition \ref{DEF:geom_alpha} are equivalent.
\end{theorem}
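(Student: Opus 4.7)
The plan is to exhibit a group isomorphism $\phi \colon \Gal(k_{\vm} / \F_q) \to \langle \vec{q} \rangle$ together with a $\phi$-equivariant bijection $\beta \colon \Z_{\vm} \to \V(\ad_{\vm})$, so that $\beta(\bx \cdot \vec{q}^t) = \phi^{-1}(\vec{q}^t)(\beta(\bx))$ for every $\bx \in \Z_{\vm}$ and $t \geq 0$; this is precisely what it means for the two actions to be equivalent.

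First I would use that $k_{\vm}$ is a finite extension of $\F_q$, so $\Gal(k_{\vm}/\F_q)$ is cyclic, generated by the $q$-Frobenius $\sigma_q \colon x \mapsto x^q$. Its order equals $[k_{\vm} : \F_q]$, which is the smallest positive $d$ such that $\mu_{m_i} \subset \F_{q^d}$ for every $i$, or equivalently such that $q^d \equiv 1 \pmod{m_i}$ for every $i$. This same $d$ is the order of $\vec{q}$ in $\Z_{\vm}^*$, because $\vec{q}^d = \vec{1}$ if and only if $q^d \equiv 1 \pmod{m_i}$ for each $i$. Hence the assignment $\sigma_q^t \mapsto \vec{q}^t$ is well-defined and yields a group isomorphism $\phi$.

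Next I would fix a primitive $m_i$-th root of unity $\zeta_i \in \mu_{m_i}$ for each $i$, and define
\begin{align*}
\beta \colon \Z_{\vm} \to \V(\ad_{\vm}), \ (x_1, \ldots, x_n) \mapsto (\zeta_1^{x_1}, \ldots, \zeta_n^{x_n}).
\end{align*}
Since $\V(\ad_{\vm}) = \mu_{m_1} \times \ldots \times \mu_{m_n}$ and each $\mu_{m_i}$ is cyclic of order $m_i$ generated by $\zeta_i$, the map $\beta$ is well-defined (the exponent depends only on $x_i \bmod m_i$) and bijective.

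Finally, equivariance is a direct computation: since $\sigma_q^t$ raises each coordinate to the $q^t$-th power,
\begin{align*}
\sigma_q^t(\beta(\bx))
= (\zeta_1^{x_1 q^t}, \ldots, \zeta_n^{x_n q^t})
= \beta(\bx \cdot \vec{q}^t).
\end{align*}
I expect the only nontrivial bookkeeping to be matching the order of $\sigma_q$ with the order of $\vec{q}$ in step one, which pins down that $\phi$ is well-defined on all of $\langle \vec{q} \rangle$; everything else falls out routinely once $\beta$ is in place.
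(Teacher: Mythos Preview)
Your proposal is correct and follows essentially the same approach as the paper: both fix primitive $m_i$-th roots of unity to build a bijection between $\Z_{\vm}$ and $\V(\ad_{\vm})$, send the Frobenius generator $\sigma_q$ to $\vec{q}$, and then verify equivariance by the computation $\sigma_q^t(\zeta_i^{x_i}) = \zeta_i^{x_i q^t}$. Your bijection $\beta$ is simply the inverse of the paper's $\gamma$, and your justification that the orders of $\sigma_q$ and $\vec{q}$ agree is, if anything, slightly more explicit than the paper's.
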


\begin{proof}
We construct a group isomorphism $\iota \colon \Gal(k_{\vm} / k) \to \langle \vec{q} \rangle$ and a bijective map $\gamma \colon \V(\ad_{\vm}) \to \Z_{\vm}$ such that the following diagram commutes:
$$
\begin{tikzcd}
\Gal(k_{\vm} / k) \times  \V(\ad_{\vm})				\arrow[r, "\alpha_{\vm/k}"] \arrow[d, "\iota \times \gamma"'] 	& \V(\ad_{\vm})  \arrow[d, "\gamma"]  \\
\langle \vec{q} \rangle \times \Z_{\vm} 	\arrow[r, "\alpha_{\vm/k}^*"]								&  \Z_{\vm}
\end{tikzcd}
$$

\paragraph{\textbf{The map $\iota$}}
The Galois group $\Gal(k_{\vm} / k)$ is cyclic of order $\lcm_{i=1}^n (\ord_{m_i}(q))$, and it is generated by the Frobenius automorphism $\sigma_q \colon k_{\vm} \to k_{\vm}, \ x \mapsto x^q$.
Hence there is a natural group isomorphism 
\begin{align*}
\iota \colon \Gal(k_{\vm} / k) \to \langle \vec{q} \rangle, \ \sigma_q^t \mapsto \vec{q}^t.
\end{align*}

\paragraph{\textbf{The map $\gamma$}}
For each $1 \leq i \leq n$, let us fix some primitive $m_i$-th root $\zeta_{m_i}$.
Note that every element in $\mu_{m_i}$ can be uniquely expressed as $\zeta_{m_i}^{a_i}$ where $0 \leq a_i < m_i$.
Depending on the choice of the primitive roots, we can construct a natural bijective map
\begin{align*}
\gamma \colon \V(\ad_{\vm}) \to \Z_{\vm}, \ (\zeta_{m_1}^{x_1}, \ldots ,\zeta_{m_n}^{x_n}) \mapsto (x_1, \ldots ,x_n).
\end{align*}
This map is bijective since $\# \V (\ad_{\vm}) = \# \Z_{\vm}$.

\paragraph{\textbf{Commutativity}}
Confirming commutativity of the diagram is a matter of writing down both compositions, and show that the outputs are equal:
Assume we have some $\sigma_q^t \in \Gal(k_{\vm} / k)$ and $(\zeta_{m_1}^{x_1}, \ldots ,\zeta_{m_n}^{x_n}) \in \V(\ad_{\vm})$.
Then we have
\begin{align*}
\gamma \circ \alpha_{\vm/k}(\sigma_q^t, (\zeta_{m_1}^{x_1}, \ldots ,\zeta_{m_n}^{x_n})) 
&= \gamma \left(  \zeta_{m_1}^{x_1 \cdot q^t}, \ldots ,\zeta_{m_n}^{x_n \cdot q^t}  \right) \\
&= (x_1 \cdot q^t \bmod m_1,  \ldots , x_n \cdot q^t \bmod m_n),
\\
\alpha_{m/k}^* \circ (\iota \times \gamma)  (\sigma_q^t, (\zeta_{m_1}^{x_1}, \ldots ,\zeta_{m_n}^{x_n})) 
&= \alpha_{m/k}^* \left( q^t \bmod m,  (x_i \bmod m_i)_{1 \leq i \leq m} \right) \\
&= (x_1 \cdot q^t \bmod m_1,  \ldots , x_n \cdot q^t \bmod m_n).
\end{align*}
This implies $\gamma \circ \alpha_{\vm/k} = \alpha_{\vm/k}^* \circ (\iota \times \gamma)$, which proves commutativity of the diagram.
\end{proof}

\begin{remark}
The equivalence of $\alpha_{\vm/k}$ and $\alpha_{\vm/k}^*$ is not canonical, as it depends on the choice of primitive roots.
\end{remark}

\subsection{Orbit sctructure}

We discuss the orbit structure of the modular group action $\alpha_{\vm/ \F_q}^*$.

\begin{lemma}
\label{LEMMA:orbit_contained}
Let $\bx \in \Z_{\vm}(\vd)$, then $\Orb(\bx)$ is contained in $\Z_{\vm}(\vd)$.
\end{lemma}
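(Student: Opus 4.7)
The plan is to work coordinate-wise and observe that the orbit of $\bx$ under $\alpha^*_{\vm/\F_q}$ is generated by iterating multiplication by $\vec{q}$, which is a unit in $\Z_{\vm}^*$. Since membership in $\Z_{\vm}(\vd)$ is defined purely by the gcds $\gcd(x_i, m_i) = d_i$, the claim will follow once I show that multiplying a coordinate by $q^t$ modulo $m_i$ preserves its gcd with $m_i$.

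More concretely, writing $\bx = (x_1, \ldots, x_n)$ and $\bx \cdot \vec{q}^t = (y_1, \ldots, y_n)$ with $y_i := x_i \cdot q^t \bmod m_i$, I would first note that $y_i \equiv x_i q^t \pmod{m_i}$ gives
\begin{align*}
\gcd(y_i, m_i) = \gcd(x_i q^t, m_i),
\end{align*}
since adding a multiple of $m_i$ does not change the gcd with $m_i$. Then I would invoke the hypothesis that $q$ is coprime to each $m_i$ (which is baked into the standing assumption for Section \ref{SECTION:orbit}); this gives $\gcd(q^t, m_i) = 1$, and hence $\gcd(x_i q^t, m_i) = \gcd(x_i, m_i) = d_i$.

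Putting the coordinates together, $\bx \cdot \vec{q}^t \in \Z_{\vm}(\vd)$ for every $t \geq 0$, so the entire orbit $\Orb(\bx)$ lies in $\Z_{\vm}(\vd)$. There is no real obstacle here: the proof is a one-line gcd manipulation repeated in each coordinate, and the only thing worth being explicit about is the reduction step $\gcd(y_i, m_i) = \gcd(x_i q^t, m_i)$, which is the standard fact that $\gcd(a + bm, m) = \gcd(a, m)$. The lemma is essentially a restatement that the $\Z_{\vm}(\vd)$ form a $\langle \vec{q} \rangle$-invariant partition of $\Z_{\vm}$, consistent with Proposition \ref{PROP:disjoint_covering}.
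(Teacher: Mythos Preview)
Your proposal is correct and matches the paper's proof almost verbatim: both argue coordinate-wise that $\gcd(x_i q^t, m_i) = \gcd(x_i, m_i) = d_i$ using the coprimality of $q$ and $m_i$, and conclude $\bx \cdot \vec{q}^t \in \Z_{\vm}(\vd)$ for all $t$. You are simply a bit more explicit about the reduction step $\gcd(y_i, m_i) = \gcd(x_i q^t, m_i)$, which the paper leaves implicit.
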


\begin{proof}
Since $q$ is coprime to $m_i$, $\gcd(m_i, x_i \cdot q^t) = \gcd(m_i, x_i) = d_i$ for all $1 \leq i \leq n$ and $t \geq 0$.
As such, $\bx \cdot \vec{q}^t \in \Z_{\vm}(\vd)$, hence $\Orb(\bx) \subseteq \Z_{\vm}(\vd)$.
\end{proof}

From the above result, the group action $\alpha^*_{\vm / \F_q}$ restricted to $\Z_{\vm}(\vd)$ is a well-defined group action.
As such, it induces an equivalence relation on $\Z_{\vm}(\vd)$ where the orbits are the equivalence classes.

Consider the group $\Z^*_{\vm/\vd}$ with the equivalence relation induced from the quotient group $\Z^*_{\vm / \vd} / \langle \vec{q}_d \rangle$ where $\vec{q}_d := (q \bmod m_i / d_i)_{1 \leq i \leq n} \in \Z^*_{\vm / \vd}$.

\begin{lemma}
\label{LEMMA:congruence_relation_preservation}
Consider $\Z^*_{\vm/\vd}$ and $\Z_{\vm} (\vd)$ with both their respective equivalence relations as defined above.
Let $\omega \colon \Z^*_{\vm/\vd} \to \Z_{\vm} (\vd)$ be the bijective map defined in Lemma \ref{LEMMA:invertible_elements_bijection}.
Then for every $\bx \in \Z^*_{\vm / \vd}$, the image of the equivalence class of $\bx$ under $\omega$ is exactly the equivalence class of $\omega(\bx)$.
\end{lemma}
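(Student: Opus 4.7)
The plan is to reduce the statement to the assertion that $\omega$ intertwines the two cyclic actions defining the equivalence relations. Both equivalence classes are orbits under a cyclic subgroup: on $\Z^*_{\vm/\vd}$ the class of $\vec{c}$ is $\{\vec{c} \cdot \vec{q}_{\vd}^t : t \geq 0\}$, while on $\Z_{\vm}(\vd)$ the class of $\omega(\vec{c}) = \vd \cdot \vec{c}$ is $\{(\vd \cdot \vec{c}) \cdot \vec{q}^t : t \geq 0\}$ by Lemma \ref{LEMMA:orbit_contained}. Since $\omega$ is already known to be a bijection $\Z^*_{\vm/\vd} \to \Z_{\vm}(\vd)$ (Lemma \ref{LEMMA:invertible_elements_bijection}), it will suffice to establish the single identity
\[
\omega(\vec{c} \cdot \vec{q}_{\vd}^t) = \omega(\vec{c}) \cdot \vec{q}^t
\]
for every $\vec{c} \in \Z^*_{\vm/\vd}$ and every $t \geq 0$; the image of the orbit of $\vec{c}$ then coincides coordinate-wise with the orbit of $\omega(\vec{c})$.

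The identity above is a coordinatewise assertion, and each coordinate amounts to the congruence
\[
d_i \cdot \bigl((c_i q^t) \bmod (m_i/d_i)\bigr) \equiv d_i c_i q^t \pmod{m_i}.
\]
I would verify this by writing $c_i q^t = s \cdot (m_i/d_i) + r$ with $0 \leq r < m_i/d_i$, so that $d_i c_i q^t = s m_i + d_i r$. Both sides then reduce to $d_i r$ in $\Z_{m_i}$, and the inequality $d_i r < m_i$ shows no further reduction takes place. This is the only piece of arithmetic involved in the proof.

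Combining these two steps, I would conclude: $\omega$ sends the orbit $\{\vec{c} \cdot \vec{q}_{\vd}^t\}_{t \geq 0}$ bijectively onto $\{\omega(\vec{c}) \cdot \vec{q}^t\}_{t \geq 0}$, which is by definition the equivalence class of $\omega(\vec{c})$ in $\Z_{\vm}(\vd)$. I do not anticipate any genuine obstacle; the only subtle point worth flagging is that the two multiplications by $q^t$ live at different scales, one modulo $m_i/d_i$ and the other modulo $m_i$, and the routine arithmetic check is precisely what certifies that $\omega$ is compatible with this change of scale.
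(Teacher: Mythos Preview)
Your proposal is correct and follows essentially the same route as the paper: both arguments show that $\omega$ carries the orbit $\{\vec{c}\cdot\vec{q}_{\vd}^t\}$ onto the orbit $\{\omega(\vec{c})\cdot\vec{q}^t\}$ by a direct coordinatewise check. The paper simply writes out $\omega([\bx])$ and asserts it equals $\Orb(\omega(\bx))$ without spelling out the modular identity $d_i\cdot\bigl((c_i q^t)\bmod(m_i/d_i)\bigr)\equiv d_i c_i q^t \pmod{m_i}$; your division-with-remainder verification makes explicit the one arithmetic step the paper leaves implicit.
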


\begin{proof}
By defintion, the equivalence class of $\bx := (x_1, \ldots, x_n)$ in $\Z^*_{\vm/\vd}$ consists of exactly the elements 
\begin{align*}
[\bx] = \left\{ (x_i \cdot q^t \bmod (m_i / d_i))_{1 \leq i \leq n} : t \geq 1  \right\}.
\end{align*}
Observe that 
\begin{align*}
\omega([\bx]) =  \left\{ (d_i \cdot (x_i \cdot q^t \bmod (m_i/d_i)) \bmod m_i)_{1 \leq i \leq n} : t \geq 1  \right\},
\end{align*}
which by definition is exactly the orbit $\Orb(\omega(\bx))$.
\end{proof}

\begin{proposition}
\label{PROP:local_orbit_structure}
Consider the group action $\alpha^*_{\vm / \F_q}$ restricted to $\Z_{\vm}(\vd)$.
The number of orbits in $\Z_{\vm} (\vd)$ equals:
\begin{align*}
\frac{\prod_{i=1}^n \varphi(m_i)}{\lcm_{1 \leq i \leq n} (\ord_{m_i/d_i}(q))};
\end{align*}
Moreover, all orbits in $\Z_{\vm} (\vd)$ have the same length, being:
\begin{align*}
\lcm_{1 \leq i \leq n} (\ord_{m_i/d_i}(q)). 
\end{align*}
\end{proposition}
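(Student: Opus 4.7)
The plan is to transport everything to the group $\Z^*_{\vm/\vd}$ via the bijection $\omega$ of Lemma \ref{LEMMA:invertible_elements_bijection} and then exploit the group-theoretic translation given by Lemma \ref{LEMMA:congruence_relation_preservation}. Concretely, Lemma \ref{LEMMA:congruence_relation_preservation} tells us that $\omega$ sends the equivalence classes of $\Z^*_{\vm/\vd}$ (induced by the subgroup $\langle \vec{q}_{\vd} \rangle$) bijectively onto the orbits of $\alpha^*_{\vm/\F_q}$ on $\Z_{\vm}(\vd)$. Since $\omega$ is a bijection, the two orbit structures agree both in the number of orbits and in their individual cardinalities. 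Hence it suffices to count cosets of $\langle \vec{q}_{\vd} \rangle$ in $\Z^*_{\vm/\vd}$ and to determine the order of $\vec{q}_{\vd}$.

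First I would compute the order of $\vec{q}_{\vd}$ in $\Z^*_{\vm/\vd}$. Since $\Z^*_{\vm/\vd} = \prod_{i=1}^n \Z^*_{m_i/d_i}$ and $\vec{q}_{\vd}$ is the componentwise tuple whose $i$-th entry is $q \bmod (m_i/d_i)$, an element $\vec{q}_{\vd}^{\,t}$ is the identity if and only if $q^t \equiv 1 \pmod{m_i/d_i}$ for every $i$, i.e.\ $\ord_{m_i/d_i}(q) \mid t$ for every $i$. The smallest such $t$ is $\lcm_{1 \leq i \leq n} \ord_{m_i/d_i}(q)$, which is therefore the order of $\vec{q}_{\vd}$. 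By Lagrange's theorem, every coset of $\langle \vec{q}_{\vd} \rangle$ in $\Z^*_{\vm/\vd}$ has exactly this cardinality, so via $\omega$ every orbit in $\Z_{\vm}(\vd)$ has length $\lcm_{1 \leq i \leq n} \ord_{m_i/d_i}(q)$.

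For the count of orbits, using Remark \ref{REMARK:univar}(2) we have $\#\Z^*_{\vm/\vd} = \prod_{i=1}^n \varphi(m_i/d_i)$, and so the number of cosets of $\langle \vec{q}_{\vd} \rangle$, hence the number of orbits in $\Z_{\vm}(\vd)$, is
\begin{align*}
\frac{\prod_{i=1}^n \varphi(m_i/d_i)}{\lcm_{1 \leq i \leq n} \ord_{m_i/d_i}(q)}.
\end{align*}
The main technical point is simply the order-in-a-product-group computation together with the observation that the translation via $\omega$ preserves both the number and sizes of equivalence classes; no step here is genuinely difficult, provided one is careful that $\vec{q}_{\vd}$ is defined modulo $m_i/d_i$ rather than modulo $m_i$, which is exactly why $\ord_{m_i/d_i}(q)$ (and not $\ord_{m_i}(q)$) appears.
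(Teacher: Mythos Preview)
Your argument is correct and follows essentially the same route as the paper: transport via $\omega$ and Lemma~\ref{LEMMA:congruence_relation_preservation} to cosets of $\langle \vec{q}_{\vd}\rangle$ in $\Z^*_{\vm/\vd}$, compute the order of $\vec{q}_{\vd}$ componentwise as $\lcm_i \ord_{m_i/d_i}(q)$, and divide. The only discrepancy is that you (correctly) obtain the numerator $\prod_{i=1}^n \varphi(m_i/d_i)$, whereas the statement and the paper's proof write $\prod_{i=1}^n \varphi(m_i)$; this is a typo in the paper, as confirmed by Remark~\ref{REMARK:univar}(2) and by the appearance of $\varphi(d_i)$ in Theorem~\ref{THM:orbit_structure}(3), so your version is the right one.
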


\begin{proof}
By Lemma \ref{LEMMA:congruence_relation_preservation}, the number of orbits equals $\# \Z^*_{\vm / \vd} / \langle \vec{q}_d \rangle = \frac{\# \Z^*_{\vm / \vd}}{\# \langle \vec{q}_d \rangle}$.
It is immediate that $\# \Z^*_{\vm / \vd} = \prod_{i=1}^n \varphi(m_i)$.
To see the expression of the denominator, observe that the order of $\langle \vec{q}_d \rangle$ is the smallest integer $t$ such that $q^t \equiv 1 \bmod (m_i/d_i)$ for all $1 \leq i \leq n$ simultaneously.
This number equals $\lcm_{1 \leq i \leq n} (\ord_{m_i/d_i}(q))$, which proves the first claim.

Every equivalence class in $\Z^*_{\vm/\vd}$ has the same size, which equals $\#\langle \vec{q}_d \rangle = \lcm_{1 \leq i \leq n} (\ord_{m_i/d_i}(q))$.
This statement translates to the orbits of $\Z_{\vm}(\vd)$ by Lemma \ref{LEMMA:congruence_relation_preservation}.
\end{proof}

\begin{theorem}[\textbf{Orbit structure theorem over finite fields}]
\label{THM:orbit_structure}
Consider the finite field $\F_q$, and let $\vm = (m_1, \ldots, m_n)$ be an $n$-tuple whose entries are coprime to $q$.
Then the following statements hold regarding the orbits of $\alpha_{\vm / \F_q}^*$:
\begin{itemize}
\item[1.]	For every orbit $\Orb(\bx)$ of $\alpha_{\vm / \F_q}^*$, there exists a unique $\vd \in \Div_{\vm}$ such that $\Orb(\bx)$ is contained in $\Z_{\vm}(\vd)$;
\item[2.]	Let $\vd := (d_1, \ldots, d_n) \in \Div_{\vm}$.
Then every orbit in $\Z_{\vm}(\vd)$ is of size $\lcm_{1 \leq i \leq n} (\ord_{m_i/d_i}(q))$, and the number of orbits in $\Z_{\vm}(\vd)$ equals:
\begin{align*}
\frac{\prod_{i=1}^n \varphi(m_i)}{\lcm_{1 \leq i \leq n} (\ord_{m_i/d_i}(q))};
\end{align*}
\item[3.]	The total number of orbits of $\alpha_{\vm / \F_q}^*$ equals:
\begin{align*}
\sum_{(d_1, \ldots, d_n) \in \Div_{\vm} } \frac{\prod_{i=1}^n \varphi(d_i)}{\lcm_{1 \leq i \leq n}(\ord_{m_i/d_i}(q))}.
\end{align*}	
\end{itemize}
\end{theorem}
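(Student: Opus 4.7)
The plan is to assemble the theorem from the results already established earlier in the section; the three parts follow by combining Proposition \ref{PROP:disjoint_covering}, Lemma \ref{LEMMA:orbit_contained}, and Proposition \ref{PROP:local_orbit_structure} with appropriate bookkeeping, and no essentially new idea is required.

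For Part 1, I would proceed as follows. Proposition \ref{PROP:disjoint_covering} gives the disjoint decomposition $\Z_{\vm} = \bigcupdot_{\vd \in \Div_{\vm}} \Z_{\vm}(\vd)$, so any $\bx \in \Z_{\vm}$ lies in exactly one block $\Z_{\vm}(\vd)$, namely the one indexed by $\vd = (\gcd(x_i, m_i))_{1 \leq i \leq n}$. Lemma \ref{LEMMA:orbit_contained} then states that $\Orb(\bx) \subseteq \Z_{\vm}(\vd)$, and the disjointness of the cover forces uniqueness of $\vd$. Part 2 is simply the statement of Proposition \ref{PROP:local_orbit_structure}: both assertions, the common orbit length $\lcm_{i}(\ord_{m_i/d_i}(q))$ and the count of orbits within a single block $\Z_{\vm}(\vd)$, are its exact content.

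For Part 3, Part 1 lets us partition the set of all orbits of $\alpha^*_{\vm / \F_q}$ according to the unique block $\Z_{\vm}(\vd)$ containing each orbit, so summing the per-block orbit counts from Part 2 over $\vd \in \Div_{\vm}$ produces the total. I expect no genuine technical obstacle here; the difficulty is purely notational, namely tracking the identification $\# \Z^*_{\vm / \vd} = \prod_i \varphi(m_i/d_i)$ recorded in Remark \ref{REMARK:univar} through the bijection of Lemma \ref{LEMMA:invertible_elements_bijection} when writing the sum. Once that bookkeeping is in place, the displayed formula falls out immediately, since all the substantive work has already been carried out in Proposition \ref{PROP:local_orbit_structure} and its supporting modular-arithmetic lemmas.
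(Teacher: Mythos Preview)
Your proposal is correct and follows exactly the same route as the paper: Part 1 combines Proposition \ref{PROP:disjoint_covering} with Lemma \ref{LEMMA:orbit_contained}, Part 2 is declared a reformulation of Proposition \ref{PROP:local_orbit_structure}, and Part 3 is obtained by summing the per-block counts over $\Div_{\vm}$. The paper's proof is in fact terser than your outline, so your additional remarks on bookkeeping (the identification $\#\Z^*_{\vm/\vd} = \prod_i \varphi(m_i/d_i)$) are helpful elaboration rather than deviation.
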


\begin{proof}
Given some $\bx \in \Z_{\vm}$, there exists a unique $\vd \in \Div_{\vm}$ such that $\bx \in \Z_{\vm}(\vd)$ (see Proposition \ref{PROP:disjoint_covering}).
By Lemma \ref{LEMMA:orbit_contained}, $\Orb(\bx) \subset \Z_{\vm}(\vd)$, which proves the first statement.

The second statement is simply a reformulation of Proposition \ref{PROP:local_orbit_structure}.

The third statement is an immedate consequence of the first and second statement.
\end{proof}

\begin{remark}
In the univariate case ($n=1$), the orbit structure provides a proof of Gauss's identity, which states that $m = \sum_{d \mid m} \varphi(d)$ for $m \in \Z_{>0}$.
Now let $p$ be any prime number coprime to $m$.
The main idea is to express the size of $\Z_m(d)$ (which equals $m$) as the sum of the lenghts of all orbits of $\alpha^*_{m/\F_p}$ in $\Z_m$.
From Theorem \ref{THM:orbit_structure}, this idea translates as follows:
\begin{align*}
m = \# \Z_m = \sum_{d \mid m} \frac{\varphi(m)}{\ord_{m/d}(p)} \cdot \ord_{m/d}(p) = \sum_{d \mid m} \varphi(d),
\end{align*}
which is Gauss's identity.
\end{remark}

\begin{theorem}[\textbf{Wedderburn decomposition over finite fields}]
\label{THM:Wedderburn_finite_field}
Let $\CR_{\vm / \F_q}$ be semisimple.
For any $\vd := (d_1, \ldots, d_n) \in \Div_{\vm}$, define the expressions $\nu_{\vm / \F_q}(\vd) := \lcm_{1 \leq i \leq n} (\ord_{m_i/d_i}(q))$ and $\eta_{\vm / \F_q}(\vd) := \frac{\prod_{i=1}^n \varphi(m_i)}{\nu_{\vm,q}(\vd)}$.
Then the Wedderburn decomposition of $\CR_{\vm / \F_q}$ into simple components equals up to isomorphism:
\begin{align*}
\CR_{\vm / \F_q} \cong \bigoplus_{\vd \in \Div_{\vm}} \left( \F_{q^{\nu_{\vm / \F_q}(\vd)}} \right)^{\eta_{\vm / \F_q}(\vd)}.
\end{align*}
\end{theorem}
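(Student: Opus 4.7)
The plan is to combine the general Wedderburn decomposition of Theorem \ref{THM:GenDec} with the orbit analysis supplied by Theorem \ref{THM:orbit_structure}. The first step is to invoke Theorem \ref{THM:GenDec} to write
$$\CR_{\vm / \F_q} \cong \bigoplus_{\bx \in S} \F_q(\bx),$$
for some set $S \subset \V(\ad_{\vm})$ of representatives of the orbits of $\alpha_{\vm / \F_q}$, where $[\F_q(\bx) : \F_q] = \# \Orb(\bx)$ for each $\bx \in S$. All remaining work is to repackage this sum using the description of the orbits that has been developed in Section \ref{SECTION:orbit}.

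Next, I would pass from the geometric picture to the modular one by means of Theorem \ref{THM:equivalence_group_actions}. The bijection $\gamma$ identifies the orbits of $\alpha_{\vm / \F_q}$ with those of $\alpha^*_{\vm / \F_q}$ and preserves orbit lengths, so $\gamma(S) \subset \Z_{\vm}$ is a set of representatives for the $\alpha^*_{\vm / \F_q}$-orbits in $\Z_{\vm}$. By Theorem \ref{THM:orbit_structure}(1) together with Proposition \ref{PROP:disjoint_covering}, every such orbit lies in a unique $\Z_{\vm}(\vd)$ with $\vd \in \Div_{\vm}$. I would therefore partition
$$\gamma(S) = \bigsqcup_{\vd \in \Div_{\vm}} S_{\vd},$$
where $S_{\vd}$ consists of the representatives of the orbits contained in $\Z_{\vm}(\vd)$.

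By Theorem \ref{THM:orbit_structure}(2), each orbit in $\Z_{\vm}(\vd)$ has length $\nu_{\vm/\F_q}(\vd)$ and $\# S_{\vd} = \eta_{\vm/\F_q}(\vd)$. Consequently, for every $\bx \in S$ with $\gamma(\bx) \in \Z_{\vm}(\vd)$ we have $[\F_q(\bx):\F_q] = \# \Orb(\bx) = \nu_{\vm / \F_q}(\vd)$. Since finite fields are classified up to isomorphism by their cardinality, $\F_q(\bx) \cong \F_{q^{\nu_{\vm / \F_q}(\vd)}}$. Substituting and regrouping by $\vd$ gives
$$\CR_{\vm / \F_q} \cong \bigoplus_{\vd \in \Div_{\vm}} \bigoplus_{\bx \in S_{\vd}} \F_{q^{\nu_{\vm / \F_q}(\vd)}} \cong \bigoplus_{\vd \in \Div_{\vm}} \left( \F_{q^{\nu_{\vm / \F_q}(\vd)}} \right)^{\eta_{\vm / \F_q}(\vd)},$$
which is the claimed decomposition.

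There is essentially no hard step left once Theorems \ref{THM:GenDec}, \ref{THM:equivalence_group_actions} and \ref{THM:orbit_structure} are in place; the proof is almost entirely a matter of bookkeeping. The only point that deserves care is the simultaneous use of $\gamma$ (to transport orbits) and the partition by $\Div_{\vm}$ (to organise the summands), together with the observation that the uniqueness of finite fields of a given order is what converts the abstract factor $\F_q(\bx)$ into the concrete $\F_{q^{\nu_{\vm / \F_q}(\vd)}}$.
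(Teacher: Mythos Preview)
Your proposal is correct and follows essentially the same route as the paper: invoke Theorem~\ref{THM:GenDec}, transfer to the modular action via Theorem~\ref{THM:equivalence_group_actions}, read off the orbit sizes and counts from Theorem~\ref{THM:orbit_structure}, and use the classification of finite fields by cardinality to identify each $\F_q(\bx)$. Your write-up is in fact slightly more explicit than the paper's (you spell out the partition $\gamma(S)=\bigsqcup_{\vd}S_{\vd}$ that the paper leaves implicit), but the argument is the same.
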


\begin{proof}
Let $S$ be a set of representatives of the orbits of the group action $\alpha_{\vm / q}$.
From Theorem \ref{THM:GenDec}, the simple components of $\alpha_{\vm / q}$ are the field extensions $\F_q(\bx)$ for all $\bx \in S$, where we have $[\F_q(\bx) : \F_q] = \# \Orb(\bx)$.
An important result in the theory of finite fields is that every finite field is, up to isomorphism, uniquely determined by its cardinality.
As such, $\F_q(\bx)$ is isomorphic to the field $\F_{q^{\# \Orb(\bx)}}$.
From Theorem \ref{THM:equivalence_group_actions}, $\alpha_{\vm / q}$ is equivalent to $\alpha^*_{\vm / q}$, of which the orbits are well-understood (Theorem \ref{THM:orbit_structure}).
The rest follows from the first and second statement of Theorem \ref{THM:orbit_structure}.
\end{proof}

\subsection{An example}
We demonstrate by means of an example how Theorem \ref{THM:Wedderburn_finite_field} can be used to determine the Wedderburn decomposition of a semisimple circulant ring over a finite field.
Consider the semisimple circulant ring
\begin{align*}
\CR_{(5,5) / \F_2} := \F_2[X_1,X_2] / (X_1^5 - 1, X_2^5 - 1).
\end{align*}
Let us first compute $\Div_{(5,5)}$:
\begin{align*}
\Div_{(5,5)} := \{ (1,1), (1,5), (5,1), (5,5) \}.
\end{align*}
In view of Theorem \ref{THM:Wedderburn_finite_field}, we compute the values of $\prod_{i=1}^2 \varphi(d_i)$, $\nu_{(5,5) / \F_2}(\vd)$ and $\eta_{(5,5) / \F_2}(\vd)$ for each $\vd \in \Div_{(5,5)}$:
\begin{table}[!h]
\begin{center}
	\begin{tabular}{| l | l | l | l |}
	\hline
	$\vd = (d_1, d_2)$	& $\prod_{i=1}^2 \varphi(d_i)$		& $\nu_{(5,5) / \F_2}(\vd)$	& $\eta_{(5,5) / \F_2}(\vd)$ 	\\ \hline
	$(1,1)$				& $1$								& $1$ 						& $1$						\\
	$(1,5)$				& $4$								& $4$						& $1$						\\
	$(5,1)$				& $4$								& $4$						& $1$						\\
	$(5,5)$				& $16$								& $4$						& $4$						\\
	\hline
	\end{tabular}
\caption{Computing $\nu_{(5,5) / \F_2}(\vd)$ and $\eta_{(5,5) / \F_2}(\vd)$}
\end{center}
\end{table}

By Theorem \ref{THM:Wedderburn_finite_field}, the number of indecomposable components of $\CR_{(5,5) / \F_2}$ equals
\begin{align*}
\sum_{\vd \in \Div_{(5,5)}} \eta_{(5,5) / \F_2}(\vd) = 1+1+1+4 = 7,
\end{align*}
and the Wedderburn decomposition of $\CR_{(5,5) / \F_2}$ equals:
\begin{align*}
\CR_{(5,5) / \F_2} \cong \F_2 \oplus \F_{2^4} \oplus \F_{2^4} \oplus (\F_{2^4})^4 = \F_2 \oplus (\F_{2^4})^6.
\end{align*}

\section{Discussion}
We have shown the Wedderburn decomposition of semisimple commutative group algebras by investigating the decomposition of circulant rings, which serves as its geometric analogue.
A natural follow up research topic is to investigate the structure of commutative group algebras which are not semisimple, and whether one can obtain insights in their algebraic structure similar to their semisimple counterpart.

\section*{Acknowledgements}
I would like to thank Jan Schoone for providing this manuscript with useful feedback.

I would like to thank my PhD-supervisor prof. dr. Joan Daemen for providing me with research topics leading to this paper.

This work was supported by the European Research Council under the ERC advanced grant agreement under grant ERC-2017-ADG Nr.\ 788980 ESCADA.

\section*{Disclosure statement}
The author reports there are no competing interests to declare.



\bibliographystyle{elsarticle-num} 
\bibliography{reference}


%
%
%
\end{document}